\pgfplotsset{compat=newest}
\newcommand{\R}{\mathbb R}
\newcommand{\bE}{\mathbf E}
\newcommand{\bF}{\mathbf F}
\newcommand{\bH}{\mathbf H}
\newcommand{\bV}{\mathbf V}
\newcommand{\bgp}{{\hat \bz}_\perp}
\newcommand{\bn}{\mathbf n}
\newcommand{\be}{\mathbf e}
\newcommand{\bp}{\mathbf p}
\newcommand{\bq}{\mathbf q}
\newcommand{\br}{\mathbf r}
\newcommand{\bu}{\mathbf u}
\newcommand{\bv}{\mathbf v}
\newcommand{\bw}{\mathbf w}
\newcommand{\bx}{\mathbf x}
\newcommand{\bz}{\mathbf z}
\newcommand{\K}{\mathcal K}
\newcommand{\T}{\mathcal T}
\newcommand{\cK}{\mathcal K}
\newcommand{\cO}{\mathcal O}
\newcommand{\cT}{\mathcal T}
\newcommand{\Div}{\mathop{\rm div}}
\newcommand{\hatbz}{{\hat \bz}}
\newcommand{\bxi}{\mbox{\boldmath$\xi$\unboldmath}}
\newcommand{\Kzh}{\cK_{\hat \bz,h}}
\newcommand{\Kz}{\cK_{\hat \bz}}
\newtheorem{assumption}{Assumption}[section]
\newtheorem{remark}{Remark}[section] 
\begin{document}
\title{Numerical analysis of a constrained strain energy minimization problem}
\author{Tilman Aleman\thanks{Institut f\"ur Geometrie und Praktische  Mathematik, RWTH-Aachen
University, D-52056 Aachen, Germany (aleman@igpm.rwth-aachen.de)} \and Arnold Reusken\thanks{Institut f\"ur Geometrie und Praktische  Mathematik, RWTH-Aachen
University, D-52056 Aachen, Germany (reusken@igpm.rwth-aachen.de)}
}
\maketitle
%
\begin{abstract}
We consider a setting in which an evolving surface is implicitly characterized as the zero level of  a level set function. Such an implicit surface does not encode any information about the path of a single point on the evolving surface. In the literature different approaches for determining a velocity that induces corresponding paths of points on the surface have been proposed. One of these is based on minimization of the strain energy functional. This then leads to a constrained minimization problem, which has a corresponding equivalent formulation as a saddle point problem. The main topic of this paper is a detailed analysis of this saddle point problem and of a finite element discretization of this problem. We derive well-posedness results for the continuous and discrete problems and optimal error estimates for a finite element discretization that uses standard $H^1$-conforming finite element spaces. 
\end{abstract}

\section{Introduction}
The problem that we consider in this paper is inspired by an application from  computer graphics.
In many computer graphics pipelines implicit representations of geometry are used. Typically implicit shapes are represented as level sets of a time dependent level set function. In such a setting the temporal coherence is rather weak, in the sense that such implicit surfaces do not encode any information about the path of a single point on the evolving surface. Therefore, the problem of determining a ``reasonable'' velocity field corresponding to an implicity evolving surface has been addressed in the computer graphics community \cite{stam2011velocity,Ben-Chen_2010_CGOM,Tao2016}. 


One natural approach, based on computing a ``near-isometric'' velocity field (also called approximate Killing vector field) is introduced in  \cite{Tao2016}. The techique is also used in \cite{Slavcheva_2018_diss, atzmon2021augmenting}.  Similar near-isometric velocity fields are used in a physics informed  neural network method for diffeomorphic shape or image registration \cite{dummer2024} and  in an optimization technique for mesh parametrization \cite{claici2017}. We outline the main idea of this approach, cf. Section~\ref{sectLS} for more details. Assume that the  surface one is interested in is implicitly represented as the zero level of a \emph{known} level set function $\phi(\bx,t)$, $\bx \in \Omega \subset \R^d$, $d=2,3$, $t \in [0,t_e]$. Here $\Omega$ is a fixed  domain that contains an evolving zero level of $\phi(\cdot,t)$ for $t \in [0,t_e]$. 
The level sets of $\phi$ with value $c$ are denoted by $\Gamma_c(t)$ and the zero level is denoted by $\Gamma(t)=\Gamma_0(t)$. 
The evolution of the level set $\Gamma_c(t)$ is completely determined by its velocity in \emph{normal} direction, denoted by $\bu_N=u_N \bn$, with $\bn$ the unit normal vector on $\Gamma_c(t)$. An arbitrary (smooth) velocity field $\bv=\bv(\cdot,t)$ on $\Omega$ can be uniquely decomposed based on the normal and tangential components on each of the level  sets $\Gamma_c(t)$. This decomposition is denoted by $\bv=\bv_N +\bv_T$. Now let $\bv$ be such that its normal component coincides with $\bu_N$, i.e., $\bv=\bu_N +\bv_T$. Such a velocity field is consistent with $\phi$ in the sense that it induces trajectories of points that stay on $\Gamma(t)$. More precisely, for $\bxi_0 \in \Gamma(0)$ let $\bxi(t)$  be such that $\bxi(0)= \bxi_0$, $\frac{d}{dt}\bxi(t)=\bv(\bxi(t),t)$, $t \in [0,t_e]$, then $\bxi(t) \in \Gamma(t)$ holds. Of course the trajectories $\bxi(t)$ depend on the choice of the tangential component $\bv_T$. In \cite{Tao2016} it is proposed to determine this tangential component based on minimization of a strain energy functional as follows:
\begin{equation} \label{eqprob1}
  \bv_T = {\rm argmin}_{\bw_T} \int_\Omega \bE(\bu_N+ \bw_T): \bE(\bu_N+ \bw_T)\, d\bx,\quad \bE(\bv):=\nabla \bv +\nabla \bv^T, 
\end{equation}
where the vector fields $\bw_T=\bw_T(\cdot,t)$ must be tangential to the level sets $\Gamma_c(t)$. Since a vector field $\bv$ that satisfies $\nabla \bv +\nabla \bv^T=0$ (on $\Omega$) is called a Killing vector field, a velocity field $\bv= \bu_N+\bv_T$, with $\bv_T$ a solution of \eqref{eqprob1}, is called an approximate Killing vector field, cf. \cite{Tao2016}. 
Hence, for each fixed $t \in [0,t_e]$ we obtain a constrained minimization problem. It is natural to introduce a Lagrange multiplier and reformulate  the problem \eqref{eqprob1} as an unconstrained  saddle point problem. The main topic of this paper is a detailed analysis of this saddle point problem and of a finite element discretization of this problem. The saddle point reformulation is also used in \cite{Tao2016}, but  as far as we know there is no literature in which well-posedness of this problem or error analysis of a discretization method is studied.

We present a well-posed saddle point formulation of the problem \eqref{eqprob1} in natural Sobolev spaces. We introduce a finite element discretization of the saddle point problem using an $H^1$-conforming pair of finite element spaces with polynomials of degree $k$ both for the (tangential) velocity and the Lagrange multiplier. We prove discrete LBB stability of this pair and derive optimal discretization error bounds. We also include results of numerical experiments that confirm the error analysis and we present results that show how such a velocity field is used to determine the trajectories of points on a evolving $\Gamma(t)$. A technical issue that we have to deal with is that of a possible nonuniqueness of a solution of \eqref{eqprob1}, which is related to rigid motions.

The remainder of the paper  is organized as follows. In Section~\ref{sectLS} we give a more detailed derivation of the constrained minimization problem \eqref{eqprob1}. This problem is formulated in a slightly more general form  in Section~\ref{Secproblem}. In that section we also study the kernel space that has to be factored out to obtain a uniquely solvable problem. In Section~\ref{sect3} we introduce a saddle point formulation and prove well-posedness of this formulation. A finite element discretization of this saddle point problem is explained in Section~\ref{secdiscr} and an error analysis of this method is presented in Section~\ref{secerroranalysis}. Finally, in Section~\ref{Experiments} we present results of some numerical experiments. 

\section{Motivation: An inverse level set problem} \label{sectLS}
The problem that we study is motivated by an application of level set techniques in computer graphics \cite{Tao2016}. We consider a situation in which a smooth  evolving  hypersurface  $\Gamma(t)$ (curve in $\R^2$, surface in $\R^3$) is implicitly represented by a level set function $\phi(\bx,t)$, i.e., $\phi(\bx,t)=0$ iff $\bx \in \Gamma(t)$.  As is common in level set techniques, we use a fixed  computational open domain $\Omega \in \R^d$, $d=2,3$, time interval $[0,t_e]$, and consider the evolution  of $\Gamma(t) \cap \Omega$ for $t \in [0,t_e]$ based on the transport of a level set function $\phi: \, \Omega_T:=\overline{\Omega} \times [0,t_e] \to \R$. 
We assume
\begin{equation} \label{assphi} \phi \in C^1(\Omega_T), \quad \min_{(\bx, t) \in \Omega_T}\|\nabla \phi(\bx,t)\| \geq c_{\min} >0.
 \end{equation}
 Here $\|\cdot\|$ denotes the Euclidean norm in $\R^d$. 
For a fixed $t\in [0,t_e]$, define $\phi_{\min}= \min_{\bx \in \overline{\Omega}}\phi(\bx,t)$, $\phi_{\max}= \max_{\bx \in  \overline{\Omega}}\phi(\bx,t)$ and the level sets $\Gamma_c(t):=\{\, \bx \in \Omega~|~ \phi(\bx,t)=c\,\}$, $\phi_{\min} \leq c \leq \phi_{\max}$. To avoid some technical details, we assume ${\rm meas}_{d-1}\Gamma_c(t) > 0$. Note that $\Omega= \cup_{\phi_{\min} \leq c \leq \phi_{\max}} \Gamma_c(t)$ holds. From the implicit function theorem it follows that the  level sets $\Gamma_c(t)$ have $C^1$ smoothness. For the unit normal on $\Gamma_c(t)$ (pointing in the direction of increasing $\phi$) we have 
\[
  \bn(\bx,t)= \frac{\nabla \phi(\bx,t)}{\|\nabla \phi(\bx,t)\|}, \quad \bx \in \Gamma_c(t).
\]
Note that this \emph{normal vector field} $\bn$ is defined on $\Omega_T$. The transport of the level set function can be described using the level set equation
\begin{equation} \label{LSeq}
 \frac{\partial \phi}{\partial t} + \bu \cdot \nabla \phi =0 \quad \text{on}~\Omega_T,
\end{equation}
where $\bu$ has to satisfy 
\begin{equation} \label{condu} \bu\cdot \bn = - \frac{\partial \phi }{\partial t}\frac{1}{ \|\nabla \phi\|} \quad \text{on}~\Omega_T.
\end{equation}
Trajectories  induced by such a velocity field $\bu$ have the property that transport conserves level set values.  More precisely, 
for arbitrary $\underline{t} \in [0,t_e)$ and $\bxi_0 \in \Gamma_c(\underline{t})$, consider the trajectory given by $\bxi(\underline{t})= \bxi_0$, $\frac{d}{dt}\bxi(t)= \bu(\bxi(t),t)$, $t \in  [\underline{t}, t_e)$. Then it holds that $\bxi(t) \in \Gamma_c(t)$ for $t \in [\underline{t},\underline{t}+ \delta]$ for sufficiently small $ \delta >0$. This in particular holds for points $\bxi_0$ on the zero level $\Gamma_0(t)$. Clearly, the trajectory depends on the specific choice of $\bu$, for which there is freedom, since the condition \eqref{condu} is satisfied for any $\bu$ such that
\begin{equation} \label{struct} 
\bu = - \frac{\partial \phi }{\partial t} \frac{1}{ \|\nabla \phi\|}\bn +\bv, \quad   \quad \text{with}~\bv ~ \text{such that}~\bv\cdot \bn=0 
\end{equation}
holds. This can be considered as a sort of inverse level set problem, where for a given  level set function $\phi$ on $\Omega_T$ we want to determine a ``suitable'' velocity $\bu$ that satisfies \eqref{struct}. This velocity field then induces corresponding trajectories of points. 

\begin{remark} \label{ex1} \rm Consider a simple example for $d=2$:
\begin{equation} \label{exsimle}
  \phi(\bx,t)= \phi(x,y,t)= (x-t)^2 + \tfrac12 y^2 -1, \quad t\in [0,\tfrac14], ~\Omega =(-2,2)^2 \setminus [-\tfrac12, \tfrac12]^2.
\end{equation}
For the minimum and maximum values of $\phi(\cdot,t)$ we have $\phi_{\min}=(\tfrac12-t)^2-\tfrac78$ and $\phi_{\max}=(2+t)^2+1$, respectively. 
The level set $\Gamma_c(t)$ is an ellipse with centre $(t,0)$ and axes $\sqrt{1+c}$, $\sqrt{2(1+c)}$, intersected with $\Omega$. Note that for $(\bx,t) \in \Omega_T$ we have $\|\nabla \phi(\bx,t)\| =  \sqrt{4(x-t)^2 + y^2} \geq 2 |x-t| \geq \tfrac12$ and  the normal vector field is given by
\[
  \bn(\bx,t)= \frac{1}{\sqrt{4(x-t)^2+ y^2}} \begin{pmatrix} 2(x-t) \\ y \end{pmatrix}.
\]
This level set function describes the transport of an ellipse with velocity field $\bu^\ast=( 1~0)^T$, i.e., a rigid motion. This level set function solves \eqref{LSeq} with $\bu =\bu^\ast$. It is, however, not obvious how this particular choice of $\bu^\ast$ can be determined if only the level set function information is available.  
\end{remark}

In the next section we describe, in a slightly more general setting,  a natural criterion that uniquely determines the remaining tangential velocity part $\bv$ in \eqref{struct} . This approach is introduced in \cite{Tao2016}.\\
%
\section{A constrained strain energy minimization problem} \label{Secproblem}
In this section we formulate a class of  minimization problems. A minimization problem of this form is used to determine, for a fixed $t \in [0,t_e]$, the tangential velocity part $\bv(\cdot, t)$ in \eqref{struct}.
The usual Sobolev spaces of once weakly differentiable scalar or vector valued functions on $\Omega$ are denoted by $H^1(\Omega)$, $\bH^1(\Omega):=H^1(\Omega)^d$.  For $\bv \in \bH^1(\Omega)$ we define $\bE(\bv):= \nabla \bv + \nabla \bv^T$ and the (strain) energy functional
\begin{equation} \label{functional}
  a(\bv,\bv):=\int_\Omega \bE(\bv):\bE(\bv)\, d \bx.
\end{equation}
The kernel of $a(\cdot,\cdot)$ is given by $K={\rm Ker}(\bE)=\{\, \bv \in \bH^1(\Omega)~|~a(\bv,\bv)=0\,\}$ and consists of the rigid motions. Its dimension is 3 or 6 for $d=2$ and $d=3$, respectively.
\begin{remark} \label{remBasis}
 \rm An $L^2$-orthonormal basis of $K$ for $d=2$ is given by 
\begin{equation} \label{basis} \begin{split}
 \bv_1 & =|\Omega|^{-\frac12} (1,0)^T,\quad \bv_2=|\Omega|^{-\frac12} (0,1)^T, \quad \bv_3=d_3 \begin{pmatrix} d_1-y \\ d_2 +x \end{pmatrix}, \\
  d_1 & = |\Omega|^{-1}\int_{\Omega} y \, d\bx, \quad d_2=-|\Omega|^{-1}\int_{\Omega} x \, d\bx, \\ d_3&=\big( \int_\Omega (d_1-y)^2 +(d_2+x)^2 \, d\bx \big)^{-\frac12}.
\end{split} \end{equation}
\end{remark}
\ \\
Let $\hat \bz:\Omega \to \R^d$ be a \emph{given} continuous velocity field with $\|\hat \bz\|=1$ on $\Omega$. An arbitrary  continuous vector field $\bu:\Omega \to \R^d$ can be uniquely decomposed as $\bu = (\bu\cdot\hat \bz)\hat \bz + \bv$, with $\bv\cdot \hat \bz=0$, i.e., $\bv(\bx)\cdot\hat{\bz}(\bx)=0$ for all $\bx \in \Omega$. We consider a setting in which for a given $\hat \bz$ the component $\bu \cdot  \hat \bz$ of  $\bu$ in direction $\hat \bz$ is given and the only degree of freedom left is the component of $\bu$ orthogonal to $\hat \bz$. We will use a minimization problem based on the energy functional \eqref{functional} and therefore we require $(\bu \cdot \hat \bz)\hat \bz \in \bH^1(\Omega)$. Sufficient for this to hold is $\bu \cdot  \hat \bz \in
H^{1,\infty}(\Omega)$ and $\hat \bz \in \bH^{1,\infty}(\Omega)$. Motivated by this we introduce the following assumption
\begin{assumption} \label{ass1} \rm 
 We assume a given vector field $\bz:= z \hat \bz: \, \Omega \to \R^d$ with
 \begin{equation} \label{Assump}
  z \in H^{1,\infty}(\Omega), \quad \hat \bz \in \bH^{1,\infty}(\Omega) ~\text{and}~ \|\hat \bz\|=1 ~\text{a.e on}~\Omega.
\end{equation}
\end{assumption}

In the remainder pointwise conditions such as ``$\bv \cdot\hat \bz=0$'' are always meant in the usual a.e. sense. In a velocity field of the form $\bu=\bz+\bv$,  for determining the component $\bv$ we use the criterion that the corresponding $\bu$ should be ``as close as possible to a rigid motion''. More precisely, we consider minimization of $a(\cdot,\cdot)$ in the space of velocities $\bu=\bz+\bv$, with  $\bv\cdot\hat \bz= 0$ on $\Omega$. 
 Define the space
\[
 V_{\hat \bz}^0:=\{\, \bv \in \bH^1(\Omega)~|~\bv \cdot\hat \bz=0\,\}.
\]
Hence, we want to minimize $a(\bz+\bv,\bz+\bv)$ for $\bv \in V_{\hat \bz}^0$. Note, however, that $a(\bz+\bv,\bz+\bv)=a(\bz+\bv +\bw,\bz+\bv+\bw)$ for all $\bw \in K$. Clearly, for uniqueness of a minimizer we have to factor out the kernel $K$. We introduce:
\begin{equation}\label{orthocond}  \begin{split}
   \cK_{\hat \bz} & :=K \cap V_{\hat \bz}^0= \{\, \bv \in K~|~\bv \cdot \hat \bz=0 \quad \text{a.e. on}~\Omega\,\},\\
   V_{\hat \bz}^0 \setminus \cK_{\hat \bz} & :=\{\, \bv \in V_{\hat \bz}^0~|~(\bv,\bw)_{1}=0 \quad \text{for all}~\bw \in \cK_{\hat \bz}\,\}, \end{split} 
\end{equation}
where $(\cdot,\cdot)_1$ is the usual scalar product on $\bH^1(\Omega)$, i.e., $\|\bv\|_1^2= \sum_{i=1}^d \big(\|v_i\|_{L^2(\Omega)}^2 +\|\nabla v_i\|_{L^2(\Omega)}^2 \big)$.  
This then leads to the following \emph{constrained strain energy    minimization problem} that we study in this paper:
\begin{equation} \label{problem1}
  \min_{\bu \in \bz + V_{\hat \bz}^0 \setminus \cK_{\hat \bz}} a(\bu,\bu)= \min_{\bv \in V_{\hat \bz}^0 \setminus \cK_{\hat \bz}} a(\bz+\bv,\bz+\bv).
\end{equation}
Due to symmetry this  minimization problem is equivalent to $\min_{\bv \in  V_{\hat \bz}^0 \setminus \cK_{\hat \bz}} a(\bv,\bv) + 2 a(\bz,\bv)$, and since  $a(\bv,\bv) > 0$ for all $\bv  \in V_{\hat \bz}^0 \setminus \cK_{\hat \bz}$, $\bv \neq 0$, we obtain  that $\bv^\ast$ solves \eqref{problem1} iff $\bv^\ast \in  V_{\hat \bz}^0 \setminus \cK_{\hat \bz}$ satisfies
\begin{equation} \label{variational}
  a(\bv^\ast,\bv)= -a(\bz,\bv) \quad \text{for all}~\bv \in V_{\hat \bz}^0\setminus \cK_{\hat \bz}.
\end{equation}
Note that ``for all $\bv \in V_{\hat \bz}^0\setminus \cK_{\hat \bz}$'' can be replaced  by ``for all $\bv \in V_{\hat \bz}^0$''.
\begin{lemma} \label{lemma1}
 The minimization problem \eqref{problem1} has a unique solution.
\end{lemma}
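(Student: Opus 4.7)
The plan is to reformulate \eqref{variational} as a coercive elliptic problem on the quotient Hilbert space $W := V_{\hat \bz}^0 / K_{\hat \bz}$, where $K_{\hat \bz} := V_{\hat \bz}^0 \cap K$, and then invoke the Lax--Milgram lemma. Since $K$ is finite-dimensional, so is $K_{\hat \bz}$; it is therefore closed in $V_{\hat \bz}^0$, and the quotient norm $\|[\bv]\|_W := \inf_{\bw_0 \in K_{\hat \bz}} \|\bv + \bw_0\|_{H^1}$ turns $W$ into a Hilbert space. Because $\bE$ vanishes on $K$, both the bilinear form $a(\cdot,\cdot)$ and the linear form $\bv \mapsto -a(\bz,\bv)$ descend unambiguously to $W$, yielding a continuous symmetric bilinear form $\tilde a$ and a continuous linear functional $\ell$.

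The heart of the argument is coercivity of $\tilde a$ on $W$, which I would establish by a Peetre--Tartar type compactness argument. Suppose for contradiction that there is a sequence $(\bv_n) \subset V_{\hat \bz}^0$ with $\|[\bv_n]\|_W = 1$ but $a(\bv_n,\bv_n) \to 0$. Finite-dimensionality of $K_{\hat \bz}$ ensures that the quotient-norm infimum is attained, so we may pass to representatives $\tilde\bv_n$ with $\|\tilde\bv_n\|_{H^1} = 1$. Extracting a subsequence, $\tilde\bv_n \rightharpoonup \bv^*$ weakly in $\bH^1(\Omega)$; weak closedness of $V_{\hat \bz}^0$ gives $\bv^* \in V_{\hat \bz}^0$, and weak lower semi-continuity yields $a(\bv^*,\bv^*) = 0$, hence $\bv^* \in K_{\hat \bz}$. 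By Rellich's theorem the convergence is $L^2$-strong, and Korn's second inequality
\[
  \|\tilde\bv_n - \bv^*\|_{H^1}^2 \le C\bigl(\|\tilde\bv_n - \bv^*\|_{L^2}^2 + a(\tilde\bv_n - \bv^*,\tilde\bv_n - \bv^*)\bigr),
\]
combined with $\bE(\bv^*) = 0$ and $a(\bv_n,\bv_n) \to 0$, upgrades this to $H^1$-strong convergence. Then $[\bv_n] = [\tilde\bv_n] \to [\bv^*] = 0$ in $W$, contradicting $\|[\bv_n]\|_W = 1$.

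Once coercivity is in hand, Lax--Milgram produces a unique $[\bv^*] \in W$ with $\tilde a([\bv^*],[\bv]) = \ell([\bv])$ for all $[\bv] \in W$, which lifts back to a solution of \eqref{variational} that is unique modulo $K_{\hat \bz}$, and hence to a unique solution of \eqref{problem1} in the intended quotient sense (the notation ``$\setminus K$'' referring to factoring out the relevant part of the kernel, namely $K_{\hat \bz}$). The main obstacle is the coercivity step: the textbook quotient Korn inequality is formulated modulo the full rigid-motion space $K$, so it does not directly yield coercivity on $W = V_{\hat \bz}^0 / K_{\hat \bz}$, and the compactness argument above is the clean way to bridge this gap for a ``partial'' kernel.
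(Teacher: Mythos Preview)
Your proof is correct and follows essentially the same route as the paper: continuity of $a$, coercivity on the factor space via Korn's inequality combined with a Peetre--Tartar compactness argument, and then Lax--Milgram. The only cosmetic difference is that the paper invokes the Peetre--Tartar lemma as a black box (applied to the injective map $\bE$ on the closed subspace $V_{\hat\bz}^0\setminus\cK_{\hat\bz}$, with Korn supplying the compact lower-order term) whereas you spell the compactness argument out explicitly, and you work in the quotient $V_{\hat\bz}^0/\cK_{\hat\bz}$ rather than in the $H^1$-orthogonal complement of $\cK_{\hat\bz}$ inside $V_{\hat\bz}^0$, which is how the paper realizes the factor space in \eqref{orthocond}.
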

\begin{proof} We use standard arguments, cf. \cite{GR}. The bilinear form $a(\cdot,\cdot)$ is continuous on  $\bH^1(\Omega)$.
Using the Korn inequality, injectivity of $\bv \to \bE(\bv)$ on $\bH^1(\Omega) \setminus K$ and the Peetre-Tartar lemma (cf. \cite{Ern04}) it follows that $a(\cdot,\cdot)$ is elliptic on $\bH^1(\Omega)\setminus K$. 
The continuity and ellipticity properties also hold on the closed subspace $V_{\hat \bz}^0 \setminus \cK_{\hat \bz}$. 
\end{proof}

Note that the inverse level set problem explained in Section~\ref{sectLS} fits in the setting introduced above. We then have, for a fixed $t$,   the given velocity field $\bz= - \frac{\partial \phi }{\partial t} \frac{1}{ \|\nabla \phi\|}\bn= z \hat \bz$, with $\hat \bz =\bn$,  and  as criterion for chosing the vector field $\bv(\cdot,t)$ in \eqref{struct} we use the minimization problem~\eqref{problem1}.

\subsection{The kernel space} \label{seckernel}
We study the kernel space $\cK_{\hat \bz}$, cf \eqref{orthocond}.
We will see that ``in the generic case'', as explained below,  this kernel is trivial, i.e, contains only the zero function. We first consider the case of the inverse level set problem, where $\hat \bz= \bn$ holds, and then the general case.
\subsubsection{The inverse level set problem}
Consider the problem described in Section~\ref{sectLS}. There we have, for a fixed $t$,  $\hat \bz = \bn$ and at $\bx \in \Omega$ the vector $\bn(\bx)$ (we deleted the argument $t$)  is the unit normal on the level set $\Gamma_c$ that contains $\bx$. 
We have 
$\bv \in  \cK_{\hat \bz}$ iff $\bv \in K$ and $\bv(\bx) \cdot \bn(\bx) =0$  for all $\bx \in \Gamma_c$ and all $c \in [\phi_{\min},\phi_{\max}]$. This means that $\bv$ is  in the tangent space of  the level set $\Gamma_c$ for all $c \in [\phi_{\min},\phi_{\max}]$. 
Furthermore, $\bv \in K$ iff $\nabla \bv +\nabla \bv^T =0$, which implies  $\nabla_{\Gamma_c} \bv + \nabla_{\Gamma_c} \bv ^T=0$, where $\nabla_{\Gamma_c} $ is the covariant derivative (for the hypersurface $\Gamma_c$) of the tangential field $\bv$. It follows that $\bv$ is a Killing vector field for the hypersurface $\Gamma_c$, which means that the (local) flow induced by $\bv$ is an isometry on $\Gamma_c$. Nonzero flows that are isometries are only possible for very special level set shapes, namely 
if $\Gamma_c$ a straight line segment or circular (for $d=2$) or  $\Gamma_c$ is planar or rotationally symmetric around a line (for $d=3$). We conclude that if for a $c \in [\phi_{\min},\phi_{\max}]$ the level set  $\Gamma_c$ does not have one of these special shapes, the kernel space  $ \cK_{\hat \bz}$ is trivial. 

\subsubsection{General case}
We now consider general vector fields $\hat \bz$ for which Assumption~\ref{ass1} holds. We consider the cases $d=2$ and $d=3$ seperately. 
\begin{lemma}\label{lemkernel1}
 Consider $d=2$. Then $\dim (\cK_{\hat \bz}) \leq 1$ holds. Define, with $\bx=(x,y)$, $R_\bx:=\begin{pmatrix} 1 & 0 & -y \\ 0 & 1 & x \end{pmatrix}$, $\bx \in \Omega$.   In the (generic) case in which 
\begin{equation} \label{CCC8}
 {\rm span} \{ \, \cup_{\bx \in \Omega} R_\bx^T \hat \bz(\bx)\} = \R^3
\end{equation}
is satisfied, we have $\dim (\cK_{\hat \bz}) = 0$. Let $\bw \in K$, $\bw \neq 0$, be a rigid motion. If $\bw$ is a rotation we assume that the center of rotation does not lie in $\overline{\Omega}$. Define $\hat \bz:= \frac{\bw_\perp}{\|\bw_\perp\|} \in \bH^{1,\infty}(\Omega)$, with $\bw_\perp \cdot \bw=0$. Then $\cK_{\hat \bz}={\rm span}(\bw)$ holds. 
\end{lemma}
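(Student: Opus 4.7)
The plan is to identify the $3$-dimensional kernel of rigid motions $K$ with $\R^3$ through the parametrization $\boldsymbol{\alpha}=(a,b,c)^T \mapsto R_\bx\boldsymbol{\alpha}=(a-cy,\,b+cx)^T$, and to translate the pointwise constraint $\bv\cdot\hat\bz=0$ into a linear-algebraic statement in $\R^3$. For $\bv=R_\bx\boldsymbol{\alpha}\in K$ one has $\bv(\bx)\cdot\hat\bz(\bx) = \boldsymbol{\alpha}^T R_\bx^T\hat\bz(\bx)$, so $\bv\in\cK_{\hat\bz}$ iff $\boldsymbol{\alpha}\perp R_\bx^T\hat\bz(\bx)$ for a.e.\ $\bx\in\Omega$. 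Hence $\cK_{\hat\bz}\cong W^\perp$ with $W:={\rm span}\{R_\bx^T\hat\bz(\bx) : \bx\in\Omega\}\subset\R^3$, and the three claims of the lemma become statements about $W$.

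To prove $\dim(\cK_{\hat\bz})\le 1$, I would argue by contradiction, assuming $\dim W\le 1$. Then there exist $\boldsymbol{\beta}=(\beta_1,\beta_2,\beta_3)^T\ne 0$ and a measurable scalar $\lambda$ with $R_\bx^T\hat\bz(\bx)=(\hat z_1,\,\hat z_2,\,-y\hat z_1+x\hat z_2)^T=\lambda(\bx)\,\boldsymbol{\beta}$ a.e. The first two components give $\hat\bz=\lambda(\beta_1,\beta_2)^T$. If $(\beta_1,\beta_2)=(0,0)$ this contradicts $\|\hat\bz\|=1$, so $\beta_1^2+\beta_2^2>0$ and $|\lambda|=(\beta_1^2+\beta_2^2)^{-1/2}$ a.e. Working with a continuous representative of $\hat\bz\in\bH^{1,\infty}(\Omega)$ (and, if necessary, restricting to a connected component of $\Omega$), $\lambda$ is a nonzero constant $\mu$ and $\hat\bz$ is constant. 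The third component then reads $\mu\beta_2 x-\mu\beta_1 y=\mu\beta_3$ on the open set $\Omega$, an affine identity which forces $\beta_1=\beta_2=\beta_3=0$, a contradiction.

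The generic case is then immediate: if $W=\R^3$ then $W^\perp=\{0\}$, so $\dim(\cK_{\hat\bz})=0$. For the final statement I would set $\hat\bz:=\bw_\perp/\|\bw_\perp\|$. The assumption that, in the rotation case, the center lies outside $\overline{\Omega}$ (respectively, that $\bw$ is a nonzero translation) guarantees that $\|\bw\|=\|\bw_\perp\|$ is uniformly bounded below on $\overline{\Omega}$, so that $\hat\bz$ is smooth with $\|\hat\bz\|=1$ and $\hat\bz\in\bH^{1,\infty}(\Omega)$, verifying Assumption~\ref{ass1}. By construction $\bw\cdot\hat\bz=0$, hence $\bw\in\cK_{\hat\bz}$; since $\bw\ne 0$ and $\dim(\cK_{\hat\bz})\le 1$ has already been established, $\cK_{\hat\bz}={\rm span}(\bw)$ follows.

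The main technical point I anticipate is the passage from the a.e.\ relation $R_\bx^T\hat\bz(\bx)=\lambda(\bx)\boldsymbol{\beta}$ to the pointwise affine identity used at the end of the second step. It requires upgrading the a.e.\ equality $|\lambda|\equiv(\beta_1^2+\beta_2^2)^{-1/2}$ to genuine constancy on a connected component of $\Omega$, which is handled by the fact that $W^{1,\infty}$ functions on an open set admit locally Lipschitz, and in particular continuous, representatives; implicit is that $\Omega$ is connected or can be treated componentwise. Everything else is routine linear algebra in $\R^3$ together with Assumption~\ref{ass1}.
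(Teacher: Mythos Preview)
Your proposal is correct and follows essentially the same route as the paper: identify $K\cong\R^3$ via $\boldsymbol{\alpha}\mapsto R_\bx\boldsymbol{\alpha}$, observe that $\cK_{\hat\bz}$ corresponds to $W^\perp$ with $W={\rm span}\{R_\bx^T\hat\bz(\bx):\bx\in\Omega\}$, and conclude from $\dim W\ge 2$, respectively $\dim W=3$, the first two statements; the last statement is handled identically in both. The only difference is that the paper simply asserts ``using $\|\hat\bz\|=1$ on $\Omega$ it follows that $\dim(W_{\hat\bz})\ge 2$'' without further detail, whereas you spell this out via a contradiction argument that tracks the three components of $R_\bx^T\hat\bz(\bx)$ and uses continuity of $\hat\bz$; your version is more explicit but not materially different.
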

\begin{proof}
See appendix. We note that the condition that the center of the rotation does not lie in $\overline{\Omega}$ is used to guarantee smoothness of $\hat \bz$.
\end{proof}
\ \\[1ex]
\begin{lemma}\label{lemkernel2}
 Consider $d=3$. Then $\dim (\cK_{\hat \bz}) \leq 3$ holds. Define, with $\bx=(x,y,z)$
\begin{equation} \label{defR}
  R_\bx= \begin{pmatrix}
          1 & 0 & 0& -y & -z & 0 
          \\ 0 & 1 & 0 & x & 0 & -z \\
          0 & 0 & 1 & 0 & x & y
         \end{pmatrix}.
\end{equation}
In the (generic) case in which 
\begin{equation} \label{CCC9}
 {\rm span} \{ \, \cup_{\bx \in \Omega} R_\bx^T \hat \bz(\bx)\} = \R^6
\end{equation}
is satisfied, we have $\dim (\cK_{\hat \bz}) = 0$. Take a fixed  $\bp \in \R^3$ with $\|\bp\|=1$.  Let $K_\bp$ be the three dimensional space of all rigid motions in the plane orthogonal to $\bp$, denoted by $\bp_\perp$. Let $P_{\bp_\perp}:\R^3 \to \bp_\perp$ be the orthogonal projection on $\bp_\perp$. A rigid motion $\bw \in K_\bp$, defined on $\bp_\perp$,  is extended to $\R^3$ by constant extension in normal direction, i.e., $\bw^e(\bx)=\bw( P_{\bp_\perp}\bx)$ for all $\bx \in \R^3$.   Then $\dim (\cK_{\hat \bz}) =3$ holds for
\begin{equation} \label{resdd} 
   \hat \bz =\bp,\quad 
   \cK_{\hat \bz} = \left\{ \bw^e~|~ \bw \in K_\bp\,\right\}.
 \end{equation}
\end{lemma}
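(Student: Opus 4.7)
The plan is to follow the strategy of the 2D case (Lemma~\ref{lemkernel1}): parameterize rigid motions via $R_\bx$, reduce the kernel characterization to linear algebra in $\R^6$, and handle the generic and special cases separately. First I would verify that every rigid motion $\bv \in K$ admits the representation $\bv(\bx) = R_\bx \bq$ for a unique $\bq \in \R^6$ (a direct calculation shows $\nabla(R_\bx\bq)$ is skew-symmetric, so the parameterization covers all $6$ degrees of freedom of $K$). Under this parameterization, the constraint $\bv \cdot \hat\bz = 0$ a.e.\ in $\Omega$ becomes $\bq^T R_\bx^T \hat\bz(\bx) = 0$ for a.e.\ $\bx \in \Omega$. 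Hence $\cK_{\hat\bz}$ is linearly isomorphic to the orthogonal complement $S^\perp \subset \R^6$ of $S := \mathrm{span}\{R_\bx^T \hat\bz(\bx) : \bx \in \Omega\}$, so $\dim \cK_{\hat\bz} = 6 - \dim S$. In particular, \eqref{CCC9} translates to $S = \R^6$, immediately yielding $\dim \cK_{\hat\bz} = 0$.

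For the upper bound $\dim \cK_{\hat\bz} \leq 3$, I would show $\dim S \geq 3$. The first three components of $R_\bx^T \hat\bz(\bx)$ are $\hat\bz(\bx)$ (a unit vector), while the last three are, up to a permutation and signs, $\bx \times \hat\bz(\bx)$. Setting $V_1 := \mathrm{span}\{\hat\bz(\bx) : \bx \in \Omega\}$ and $d_1 := \dim V_1$, three subcases arise. If $d_1 = 3$, the projection of $S$ onto the first three coordinates equals $\R^3$, so $\dim S \geq 3$. If $d_1 = 1$, by continuity $\hat\bz \equiv \bp$ for a unit vector $\bp$, and the last three coordinates of $R_\bx^T \bp$, essentially $\bx \times \bp$, sweep out the 2D subspace $\bp_\perp$ as $\bx$ ranges over the open set $\Omega$, giving $\dim S = 3$. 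If $d_1 = 2$, the image of $\hat\bz$ lies in the 1-dimensional circle $V_1 \cap S^2$; by the $\bH^{1,\infty}$ regularity I expect to find $\bx_1 \neq \bx_2$ in $\Omega$ with $\hat\bz(\bx_1) = \hat\bz(\bx_2)$ and $\bx_1 - \bx_2$ not parallel to $\hat\bz(\bx_1)$, so that $R_{\bx_1}^T \hat\bz(\bx_1) - R_{\bx_2}^T \hat\bz(\bx_2)$ has zero first three components and a nonzero cross product in the last three, providing the needed third dimension.

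For the special case $\hat\bz \equiv \bp$ constant, writing $\bv = \bc + \boldsymbol{\omega} \times \bx$ the condition $\bv \cdot \bp = 0$ on $\Omega$ is equivalent to $\bc \cdot \bp = 0$ and $(\boldsymbol{\omega} \times \bx) \cdot \bp = 0$ for all $\bx$; the latter forces $\boldsymbol{\omega} \parallel \bp$. Thus $\cK_{\bp}$ is parameterized by $\bc \in \bp_\perp$ (two d.o.f.) and $\boldsymbol{\omega} = \lambda \bp$, $\lambda \in \R$ (one d.o.f.), so $\dim \cK_{\bp} = 3$. To identify this with $\{\bw^e : \bw \in K_\bp\}$, I would write each $\bw \in K_\bp$ as $\bw(\by) = \bc + \lambda(\bp \times \by)$ for $\by \in \bp_\perp$, $\bc \in \bp_\perp$, $\lambda \in \R$, and then $\bw^e(\bx) = \bw(P_{\bp_\perp}\bx) = \bc + \lambda(\bp \times P_{\bp_\perp}\bx) = \bc + \lambda(\bp \times \bx)$ (using $\bp \times \bp = 0$). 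This matches the explicit parameterization of $\cK_\bp$ derived above, and both spaces have dimension $3$, giving \eqref{resdd}.

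The main obstacle is the subcase $d_1 = 2$ of the upper bound: ruling out $\dim S = 2$ demands a careful level-set or rank argument combining the $\bH^{1,\infty}$ regularity of $\hat\bz$ with the openness of $\Omega$, and this is the one place where the proof departs from a direct algebraic computation. The remaining subcases $d_1 \in \{1,3\}$ and the explicit identification \eqref{resdd} are straightforward.
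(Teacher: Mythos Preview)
Your proposal follows exactly the paper's approach---parametrize $K$ via the columns of $R_\bx$, identify $\cK_{\hat\bz}$ with the orthogonal complement of $W_{\hat\bz}:=\mathrm{span}\{R_\bx^T\hat\bz(\bx):\bx\in\Omega\}$ in $\R^6$, and then argue $\dim W_{\hat\bz}\ge 3$ from $\|\hat\bz\|=1$---and in fact supplies more detail than the paper, which simply asserts the bound $\dim W_{\hat\bz}\ge 3$ without justification and leaves the identification \eqref{resdd} ``to the reader''. Your case split on $d_1=\dim\,\mathrm{span}\{\hat\bz(\bx)\}$ and your explicit computation for $\hat\bz\equiv\bp$ are correct and fill those gaps; the $d_1=2$ subcase is indeed the only place requiring a genuine argument, and the level-set idea you outline (or an equivalent algebraic argument using the explicit form of the last three components of $R_\bx^T\hat\bz$) does the job.
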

\begin{proof}
See appendix. 
\end{proof}

The results above  explain in which sense the   kernel $\cK_{\hat \bz}$ is trivial ``in the generic case''.  
\begin{remark} \label{remrigid}
 \rm 
 If the kernel $\cK_{\hat \bz}$ is trivial then indeed, as claimed in \cite{Tao2016},  the minimization problem recovers rigid motions. Let $\bu $ be a rigid motion in $\R^d$ and $\bz= (\bu \cdot \hat \bz) \hat \bz$ with a given $\hat \bz$ that  satisfies Assumption~\ref{ass1} and for which $\cK_{\hat \bz}=\{\mathbf{0}\}$ holds. Then $\bv= \bu - \bz \in V_{\hat \bz}^0$ solves the minimization problem \eqref{problem1} because $a(\bz+\bv,\bz+\bv)=a(\bu,\bu)=0$ holds, and thus we recover $\bu=\bz+\bv$.  If $\dim (\cK_{\hat \bz}) \geq 1$, then in certain special cases a rigid motion is not recovered. As an example, consider $d=2$, and denote the rigid motions in coordinate directions by $\mathbf{e}_1(\bx)=( 1~0)^T$, $\mathbf{e}_2(\bx)=(0~1)^T$ for all $\bx \in \R^2$. Take $\bu = c_1 \mathbf{e}_1+ c_2 \mathbf{e}_2$ with constants $c_1,c_2\neq 0$. Take $\hat \bz  := \mathbf{e}_1$ and thus $\cK_{\hat \bz}={\rm span} (\mathbf{e}_2)$. Consider the minimization problem \eqref{problem1} with  $\bz= c_1 \hat \bz$. The rigid motion $\bu$ is recovered only for $\bv=c_2 \mathbf{e}_2$, which, however, is not feasible since $\mathbf{e}_2 \notin V_{\hat \bz}^0 \setminus K= V_{\hat \bz}^0 \setminus \cK_{\hat \bz}$.   
\end{remark}

\ \\
%
\subsection{Examples} \label{sect2}
We discuss two specific examples  with known solutions. 
The  first one is  the example explained in Remark~\ref{ex1}. For a fixed $t \in [0,\tfrac14]$ we take $\phi(\cdot,t)$ as in Remark~\ref{ex1}. The transport of this $\phi$ corresponds to the rigid motion $\bu(\bx)=(1~0)^T$, $\bx \in \R^2$.  We take
\[
   \bz = z \hat \bz=(\bu \cdot \bn)  \bn = - \frac{\partial \phi}{\partial t} \frac{\nabla \phi}{\|\nabla \phi\|^2}.
\]
This flow field in normal direction is available from the level set function  information. 
For this case we have $\cK_{\hat \bz}=\{\mathbf{0}\}$ and the solution of the minimization problem \eqref{problem1} is  the rigid motion  $\bu$, cf.~Remark~\ref{remrigid}, with  energy $a(\bu,\bu)=0$. Thus we recover, in this example, the rigid motion from  the level set function information only. 
\ 
\\[1ex]

We introduce another class of problems for which an analytical solution is known. In this example we have $\dim(\cK_{\hat \bz})=1$, cf. also Remark~\ref{remRes} below. 
\begin{lemma} \label{lemspecial}
We take $d=2$ and define ${\hat \bz}:=\begin{pmatrix} c \\ s\end{pmatrix}$, $\bgp:=\begin{pmatrix} -s \\ c\end{pmatrix}$, with $c^2+s^2=1$. Take  
a smooth function $F:\R \to \R$ with
\begin{equation} \label{Gauge}
  \int_{\Omega}F\big({\hat \bz} \cdot\begin{pmatrix} x \\ y \end{pmatrix}\big) \, d \bx=0. 
\end{equation}
Consider \eqref{problem1} with 
\begin{equation} \label{defz} \bz(x,y):=\left(\bgp \cdot\begin{pmatrix} x \\ y \end{pmatrix} F'\big({\hat \bz} \cdot\begin{pmatrix} x \\ y \end{pmatrix}\big) \right) {\hat \bz}.
\end{equation}
Then
\begin{equation} \label{defv}
  \bv(x,y)= - F\big({\hat \bz} \cdot\begin{pmatrix} x \\ y \end{pmatrix}\big) \bgp
\end{equation}
is the unique solution of \eqref{problem1}.
\end{lemma}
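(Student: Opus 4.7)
The plan is to verify directly that $\bv$ as given satisfies the equivalent variational characterization \eqref{variational} of the minimizer, that $\bv$ lies in $V_{\hat\bz}^0$ and satisfies the $H^1$-orthogonality against $\cK_{\hat\bz}$ required by \eqref{orthocond}, and then invoke Lemma~\ref{lemma1} for uniqueness. The whole computation becomes transparent in the orthonormal frame $(\hat\bz,\bgp)$ with rotated coordinates $\eta := \hat\bz\cdot(x,y)^T = cx+sy$ and $\tau := \bgp\cdot(x,y)^T = -sx+cy$; in this frame $\bu := \bz + \bv$ has components $(u_1,u_2) = (\tau F'(\eta),\,-F(\eta))$.

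First I would identify $\cK_{\hat\bz}$. Since $\hat\bz$ is a constant unit vector, writing a general rigid motion and imposing $\bw\cdot\hat\bz=0$ shows that rotations cannot satisfy this condition on $\Omega$ (they contribute an affine-in-$\tau$ term which is nonconstant), so only the translation $\bgp$ survives and $\cK_{\hat\bz}=\mathrm{span}(\bgp)$, consistent with Lemma~\ref{lemkernel1}. Then I would check in one line that $\bv\cdot\hat\bz = -F(\eta)\,\bgp\cdot\hat\bz = 0$, so $\bv\in V_{\hat\bz}^0$. Orthogonality to $\cK_{\hat\bz}$ in the $H^1$-inner product collapses, because $\bgp$ is constant and thus has vanishing gradient, to $(\bv,\bgp)_1=\int_\Omega \bv\cdot\bgp\,d\bx = -\int_\Omega F(\eta)\,d\bx$, which is exactly zero by the gauge condition \eqref{Gauge}. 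So $\bv\in V_{\hat\bz}^0\setminus \cK_{\hat\bz}$.

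The main computation is the Galerkin identity $a(\bu,\bw)=0$ for all $\bw\in V_{\hat\bz}^0$. In the rotated frame one finds
\[
\nabla \bu = \begin{pmatrix} \tau F''(\eta) & F'(\eta) \\ -F'(\eta) & 0 \end{pmatrix},\qquad
\bE(\bu)= \begin{pmatrix} 2\tau F''(\eta) & 0 \\ 0 & 0 \end{pmatrix},
\]
so $\bE(\bu)$ has only the $(1,1)$-entry nonzero. Any $\bw\in V_{\hat\bz}^0$ has the form $\bw = w_2(\eta,\tau)\bgp$, so in the rotated frame the first row of $\nabla\bw$ vanishes and hence the $(1,1)$-entry of $\bE(\bw)$ is zero. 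Therefore $\bE(\bu):\bE(\bw)=0$ pointwise, giving $a(\bz+\bv,\bw)=0$, which is precisely \eqref{variational}. Combined with $\bv\in V_{\hat\bz}^0\setminus\cK_{\hat\bz}$, Lemma~\ref{lemma1} identifies $\bv$ as the unique solution of \eqref{problem1}.

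The only potential obstacle is recognizing the right coordinate system: once one works in $(\hat\bz,\bgp)$ with coordinates $(\eta,\tau)$, the structure of $\bz$ (a shear-like field along $\hat\bz$ with coefficient $\tau F'(\eta)$) conspires with $\bv = -F(\eta)\bgp$ so that the off-diagonal cancellation in $\bE(\bu)$ happens, and the residual diagonal entry lies in the single direction against which any $\bw\in V_{\hat\bz}^0$ is ``blind.'' The gauge condition \eqref{Gauge} plays no role in this Galerkin identity—it is needed solely to pin down $\bv$ as the correct representative modulo the one-dimensional kernel $\mathrm{span}(\bgp)$.
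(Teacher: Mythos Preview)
Your proof is correct and follows essentially the same strategy as the paper's: identify $\cK_{\hat\bz}=\mathrm{span}(\bgp)$, use the gauge condition \eqref{Gauge} for the orthogonality constraint, and verify the variational characterization \eqref{variational} by working in the orthonormal frame $(\hat\bz,\bgp)$. Your organization of the final step is slightly more direct than the paper's: rather than computing $a(\bv,\bw)$ and $a(\bz,\bw)$ separately and then deriving the sufficient pointwise conditions $\nabla v_2\cdot\bgp=0$, $\nabla v_2\cdot\hat\bz=-\nabla z_1\cdot\bgp$, you assemble $\bE(\bu)$ for $\bu=\bz+\bv$ in one stroke and observe that its only nonzero entry sits in the $(1,1)$ slot, which is exactly the slot where $\bE(\bw)$ vanishes for any $\bw\in V_{\hat\bz}^0$, giving $\bE(\bu):\bE(\bw)=0$ pointwise.
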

\begin{proof} Given in the Appendix.
\end{proof}
\ \\[1ex]
\begin{remark} \label{remRes}\rm The Gauge condition \eqref{Gauge} is introduced to guarantee that $\bv$ as in \eqref{defv} satisfies the orthogonality condition \eqref{orthocond}, i.e., $\bv \perp_{H^1} \cK_{\hat \bz}= {\rm span}( \hat \bz_\perp)$ holds. In the example of Lemma~\ref{lemspecial} the given velocity field $\bz = z \hat \bz$ has a spatially constant direction $\hat \bz = (c ~s)^T$ and has a varying length $z(x,y)$. One aims to correct with a $\bv$ that is pointwise orthogonal to $\hat \bz$, i.e. $\bv$ has the constant direction $\hat \bz_{\perp}$, and has varying length. The optimal (w.r.t. strain energy minimization) correction is given by $\bv$ in  \eqref{defv}.  As a special case one can consider a domain $\Omega$ that is symmetric with respect to the $y$-axis, i.e., $(x,y) \in \Omega$ iff $(-x,y) \in \Omega$, $c=1$, $s=0$ and the function $F(\alpha)=\alpha$. This then satisfies \eqref{Gauge} and we obtain $\bz(x,y)=(y~0)^T$ with corresponding $\bv(x,y)= (0 ~-x)^T$. We thus obtain a velocity field $\bu=\bz+\bv=(y~-x)^T$ that is a rigid motion, hence has a strain energy $a(\bu,\bu)=0$. In general, however, for other choices of $F$ we have $a(\bu,\bu) >0$. 
\end{remark}

\section{Reformulation as saddle point problem} \label{sect3}
To obtain a feasible solution method we introduce a Lagrange multiplier to eliminate the constraint $\bv\cdot \hat \bz = 0$. 
 We define $b: \, H^1(\Omega) \times \bH^1(\Omega) \to \R$, $b(\lambda, \bv):=(\lambda, \bv \cdot \hat \bz )_1$, where $(\cdot,\cdot)_1$ is the standard scalar product on $H^1(\Omega)$, i.e., $(v,w)_1 =\int_\Omega \nabla v \cdot \nabla w + v w\, d\bx$. For the kernel of $b(\cdot,\cdot)$   we have  
\[
  \ker (b) = \{\, \bv \in \bH^1(\Omega)~|~b(\mu,\bv)=0 \quad \forall ~\mu \in H^1(\Omega)\,\}=V_{\hat \bz}^0.
\]
We introduce
 \begin{equation} \label{defH} \bH_\ast^1(\Omega):=  \bH^1(\Omega) \setminus \cK_{\hat \bz} =\{\, \bv \in \bH^1(\Omega)~|~(\bv,\bw)_1=0 ~\text{for all}~ \bw \in \cK_{\hat \bz}\,\}.
\end{equation}
Note that in the generic case we have $\cK_{\hat \bz}=\{\mathbf{0}\}$ and then $\bH_\ast^1(\Omega)=\bH^1(\Omega)$ holds.
There exists a constant $C_d$, depending only on the dimension $d$, such that the following holds:
\begin{equation} \label{Cd}
  \|\bv \cdot \bw \|_1 \leq C_d \|\bv \|_1 \|\bw\|_{H^{1, \infty}} \quad \text{for all}~\bv \in \bH^1(\Omega),~\bw \in \bH^{1,\infty}(\Omega).
\end{equation}
This constant $C_d$ will be used in continuity and inf-sup estimates below. 
Below, the orthogonal projection in $\bH^1(\Omega)$ w.r.t. $(\cdot,\cdot)_1$ on a subspace $W$ is denoted by $Q_{W}$. 
\begin{lemma} \label{lemmahulp}
 We consider the saddle point problem: determine $(\bw, \lambda)  \in  \bH_\ast^1(\Omega) \times H^1(\Omega)$ such that
\begin{equation} \label{problem2a}
\begin{split}
  a(\bw,\bv)+ b(\lambda, \bv) &= -a(\bz,\bv) \quad \text{for all}~~\bv \in  \bH_\ast^1(\Omega) \\
   b(\mu, \bw) &= 0 \quad \text{for all}~~\mu \in  H^1(\Omega).
\end{split}
\end{equation}
This problem is well-posed.
\end{lemma}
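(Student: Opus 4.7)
The plan is to apply the standard Brezzi saddle point theorem, checking its three hypotheses on the pair $(\bH^1_\ast(\Omega), H^1(\Omega))$: continuity of the bilinear forms $a(\cdot,\cdot)$ and $b(\cdot,\cdot)$, coercivity of $a(\cdot,\cdot)$ on $\ker b$, and an inf-sup condition for $b(\cdot,\cdot)$. Continuity of $a$ on $\bH^1(\Omega)$ is classical, and $b$ is continuous since $\hat\bz \in \bH^{1,\infty}(\Omega)$ implies $\bv\cdot\hat\bz \in H^1(\Omega)$ with $\|\bv\cdot\hat\bz\|_1 \le C\|\hat\bz\|_{1,\infty}\|\bv\|_1$. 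The spaces $\bH^1_\ast(\Omega)$ and $H^1(\Omega)$ are Hilbert spaces (the former being a closed subspace of $\bH^1(\Omega)$ since $\cK_{\hat\bz}$ is finite-dimensional).

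Next I would identify the kernel. Because $\bv\cdot\hat\bz \in H^1(\Omega)$ for every $\bv\in\bH^1(\Omega)$, the condition $b(\mu,\bv)=0$ for all $\mu \in H^1(\Omega)$ forces $\bv\cdot\hat\bz = 0$ a.e., so
\[
\{\bv \in \bH^1_\ast(\Omega) : b(\mu,\bv)=0~\forall \mu \in H^1(\Omega)\} = V_{\hat\bz}^0 \cap \bH^1_\ast(\Omega) = V_{\hat\bz}^0 \setminus \cK_{\hat\bz}.
\]
On this space the kernel of $\bv \mapsto \bE(\bv)$ is exactly $\cK_{\hat\bz}$, which has been factored out. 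Ellipticity of $a(\cdot,\cdot)$ on $V_{\hat\bz}^0 \setminus \cK_{\hat\bz}$ then follows by the same Korn/Peetre--Tartar argument used in the proof of Lemma~\ref{lemma1}, using the compact embedding $\bH^1(\Omega)\hookrightarrow \bL^2(\Omega)$.

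The inf-sup condition is the only step that needs a specific construction. For a given $\mu \in H^1(\Omega)$ I would take the test function
\[
\bv_\mu := \mu\hat\bz - Q_{\cK_{\hat\bz}}(\mu\hat\bz) \in \bH^1_\ast(\Omega).
\]
Since every $\bw \in \cK_{\hat\bz}$ satisfies $\bw\cdot\hat\bz = 0$ a.e.\ and $\|\hat\bz\|=1$ a.e., we get $\bv_\mu\cdot\hat\bz = \mu\|\hat\bz\|^2 - 0 = \mu$, hence $b(\mu,\bv_\mu) = (\mu,\mu)_1 = \|\mu\|_1^2$. Non-expansiveness of orthogonal projection together with Assumption~\ref{ass1} gives $\|\bv_\mu\|_1 \le \|\mu\hat\bz\|_1 \le C\|\hat\bz\|_{1,\infty}\|\mu\|_1$. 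Therefore
\[
\sup_{\bv \in \bH^1_\ast(\Omega)\setminus\{0\}} \frac{b(\mu,\bv)}{\|\bv\|_1} \ge \frac{\|\mu\|_1^2}{C\|\hat\bz\|_{1,\infty}\|\mu\|_1} = \beta\|\mu\|_1
\]
with $\beta = (C\|\hat\bz\|_{1,\infty})^{-1}>0$.

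The main obstacle is the inf-sup step, and in particular ensuring that the projection onto $\bH^1_\ast(\Omega)$ does not spoil the duality pairing; this works precisely because $\cK_{\hat\bz} \subset V_{\hat\bz}^0$, so the subtracted piece $Q_{\cK_{\hat\bz}}(\mu\hat\bz)$ is annihilated by $b(\mu,\cdot)$. With all three Brezzi conditions verified, existence, uniqueness and continuous dependence on the right-hand side $-a(\bz,\cdot) \in \bH^1_\ast(\Omega)^\prime$ follow immediately.
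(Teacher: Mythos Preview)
Your proposal is correct and follows essentially the same route as the paper: apply standard Brezzi theory, verify continuity of $a$ and $b$ (the latter via $\hat\bz\in\bH^{1,\infty}$), invoke Korn plus Peetre--Tartar for ellipticity of $a$ on $\ker b\cap\bH^1_\ast(\Omega)=V_{\hat\bz}^0\setminus\cK_{\hat\bz}$, and establish the inf-sup condition using the test function $\mu\hat\bz-Q_{\cK_{\hat\bz}}(\mu\hat\bz)$. The only cosmetic difference is that the paper argues $b(\mu,\bv_\mu)=\|\mu\|_1^2$ by observing $\cK_{\hat\bz}\subset\ker b$, whereas you compute $\bv_\mu\cdot\hat\bz=\mu$ pointwise; both rely on the same fact that $\bw\cdot\hat\bz=0$ for $\bw\in\cK_{\hat\bz}$.
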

\begin{proof}
Note that $a(\cdot,\cdot)$ is continuous on $\bH_\ast^1(\Omega)$ and elliptic on $\bH_\ast^1(\Omega)\cap \ker (b)=\ker (b) \setminus (\ker (b)\cap K)$. Using Assumption~\ref{ass1} we get 
\begin{equation} \label{contA}
  |b(\lambda,\bv)| =|(\lambda, \bv \cdot \hat \bz)_1| \leq c_{\rm cont} \|\lambda\|_1 \|\bv\|_1,  \quad c_{\rm cont}:=C_d \|\hat \bz\|_{H^{1,\infty}}.
\end{equation}
For the right-hand side in \eqref{problem2a} we have that  $\bv \to a(\bz,\bv)$ is continuous on $\bH^1(\Omega)$.
It remains to check the inf-sup property for $b(\cdot,\cdot)$. Take $\lambda \in H^1(\Omega)$ and define $\bv:=\hat \bz \lambda - Q_{\cK_{\hat \bz}}(\hat \bz \lambda)\in  \bH_\ast^1(\Omega)$. For this choice we have $\|\bv\|_1 \leq \|\hat \bz \lambda\|_1 \leq C_d \|\hat \bz\|_{H^{1,\infty}} \|\lambda\|_1$. Note that $\cK_{\hat \bz} \subset \ker (b)$ and thus $b(\lambda,Q_{\cK_{\hat \bz}} \bw)=0$ for all $\bw \in \bH^1(\Omega)$.  Using this we get 
\begin{equation} \label{infsup}
 \frac{b(\lambda,\bv)}{\|\bv\|_1} =\frac{(\lambda,\lambda)_1}{\|\bv\|_1} \geq c_{\rm infsup}\|\lambda\|_1, \quad c_{\rm infsup}:=c_{\rm cont}^{-1}.
\end{equation}
and it follows that  $b(\cdot,\cdot)$ has the inf-sup property on $H^1(\Omega) \times \bH_\ast^1(\Omega)$. Hence, the problem \eqref{problem2a} has a unique solution. 
\end{proof}
\ \\[1ex]
A saddle point reformulation of the problem \eqref{problem1} is as follows: determine $(\bu,\lambda) \in \bH^1(\Omega) \times H^1(\Omega)$ with $Q_{\cK_{\hat \bz}}\bu=Q_{\cK_{\hat \bz}}\bz$  such that
\begin{equation} \label{problem3}
\begin{split}
  a(\bu,\bv)+ b(\lambda, \bv) &= 0 \quad \text{for all}~~\bv \in  \bH^1(\Omega) \\
   b(\mu, \bu) &= b(\mu, \bz)=(\mu,z)_1 \quad \text{for all}~~\mu \in  H^1(\Omega).
\end{split}
\end{equation}
For testing we use $\bv \in  \bH^1(\Omega)$ instead of $\bv \in \bH_\ast^1(\Omega)= \bH^1(\Omega) \setminus \cK_{\hat \bz}$. This is allowed because for $\bv\in \cK_{\hat \bz} = \ker(b) \cap K$ we have $a(\bu,\bv)=0$ and $b(\lambda,\bv)=0$. 
From Lemma~\ref{lemmahulp} and a shift argument $\bu=\bz+\bw$ it follows that \eqref{problem3} is well-posed. 
 Let $(\bu,\lambda)$ be the solution of \eqref{problem3}. Testing with $\bv \in V_{\hat \bz}^0= \ker(b)$ we obtain $a(\bu,\bv)=0$ for all $\bv \in V_{\hat \bz}^0$. From this it follows that $\bu$ is the unique solution of \eqref{problem1}. We conclude that \eqref{problem3} is a \emph{well-posed saddle point reformulation} of the constrained minimization problem \eqref{problem1}. This saddle point formulation will be the basis for the finite element discretization that we use to solve the minimization problem approximately, cf. Section~\ref{secdiscr} below. We note that the equivalence between the formulations \eqref{problem2a} and \eqref{problem3} via the shift $\bu=\bz+\bw$, in general does \emph{not} hold for the corresponding discrete versions of these problems. The reason for this is that a shift property $\bu_h=\bz+\bw_h$ with finite element functions $\bu_h$, $\bw_h$, implies that $\bz$ is a finite element function, which in general is not the case.   
 
 \begin{remark} \rm 
 We  comment on the choice of the space $\bH_\ast^1(\Omega)= \bH^1(\Omega) \setminus \cK_{\hat \bz}$ in \eqref{defH}.
As shown above, with this choice the Lagrange multiplier formulation in \eqref{problem3} is equivalent to \eqref{problem1} in the sense that the $\bu$-solutions of both problems are the same. Alternatively one might consider a Lagrange multiplier formulation with the (simpler) choice $\tilde \bH_\ast^1(\Omega):= \bH^1(\Omega) \setminus K \subset \bH_\ast^1(\Omega)$. Note that the bilinear form $a(\cdot,\cdot)$ is elliptic on this space but not on  $\bH_\ast^1(\Omega)$. With the choice $\tilde \bH_\ast^1(\Omega)$, however, in general  the Lagrange multiplier formulation and \eqref{problem1} do not have the same solution. As an example, consider the problem presented in Remark~\ref{ex1}, consisting of an ellipse that is transported by the rigid motion $\bu= (1~0)^T$. In Section~\ref{sect2} it is shown that we obtain this $\bu$ as minimizer of the constrained minimization problem \eqref{problem1} and thus also as solution of the equivalent saddle point problem \eqref{problem3}. If, however, we use $\tilde \bH_\ast^1(\Omega)$ then, due to $\bu \notin\tilde \bH_\ast^1(\Omega)$, we will not obtain this $\bu$ as solution of the saddle point problem with this alternative space.  
 \end{remark}


\begin{remark}   \label{remLa}
 \rm We reconsider the special class of problems presented in Lemma~\ref{lemspecial}. We briefly discuss the corresponding saddle point formulation and in particular investigate the corresponding Lagrange multiplier solution $\lambda$. In this example we have $\cK_{\hat \bz}={\rm span}(\hat \bz_\perp)$. Since $\hat \bz_\perp$ is a constant vector we have
 \[ Q_{\cK_{\hat \bz}} \bu = \frac{(\bu,\bgp)_{1}}{(\bgp,\bgp)_{1}}\bgp=\frac{(\bu,\bgp)_{L^2}}{(\bgp,\bgp)_{L^2}}\bgp =\frac{(\bu,\bgp)_{L^2}}{|\Omega|}\bgp. 
 \]
  Let $\bz= z_1 \hat \bz$ and $\bv=v_2 \bgp$ be as in \eqref{defz} and \eqref{defv}, respectively. 
 Note that 
 $b(\lambda, \bv)=(\lambda, \bv\cdot \hat \bz)_1$. We test in \eqref{problem3} with $\bv=w_1 {\hat \bz}$, $w_1 \in H^1(\Omega)$. This yields
 \begin{equation} \begin{split}
  b(\lambda, \bv) & =(\lambda,w_1)_1= -a(\bu,w_1{\hat \bz}) = -a(z_1{\hat \bz} +v_2 \bgp, w_1{\hat \bz}) \\
    & = -2 \int_\Omega \nabla z_1 \cdot \nabla w_1 + (\nabla z_1 \cdot \hat \bz)(\nabla w_1 \cdot {\hat \bz}) +(\nabla v_2\cdot {\hat \bz})(\nabla w_1\cdot \bgp) \, d\bx 
    \end{split}
 \end{equation}
for all $w_1 \in H^1(\Omega)$. We introduce notation for the smooth vectors $\bF_\bz:=2 \nabla z_1$, $\bF_\bv:=2 \nabla v_2$. Hence, we conclude that the Lagrange multiplier is the unique solution of the elliptic problem:
\begin{equation} 
   (\lambda,w_1)_1=- \int_\Omega \bF_\bz \cdot \nabla w_1 + (\bF_\bz\cdot {\hat \bz})(\nabla w_1 \cdot {\hat \bz})+(\bF_\bv\cdot {\hat \bz})(\nabla w_1\cdot \bgp) \, d\bx~\forall~w_1 \in H^1(\Omega).
\end{equation}
In strong formulation this problem takes the form of a Neumann problem:
\begin{equation} \label{Lagrangeeq}
 \begin{split}
  - \Delta \lambda + \lambda & = \Div \bF_\bz +{\hat \bz} \cdot \nabla (\bF_\bz \cdot {\hat \bz}) + \bgp \cdot \nabla(\bF_\bv \cdot {\hat \bz})\quad \text{in}~\Omega \\
   \bn \cdot \nabla \lambda &= -\bn \cdot \bF_\bz - ({\hat \bz} \cdot \bn) (\bF_\bz \cdot {\hat \bz})- (\bgp \cdot \bn)(\bF_\bv \cdot {\hat \bz})\quad \text{on}~ \partial\Omega, 
 \end{split}
\end{equation}
where $\bn$ denotes the unit normal on $\partial \Omega$.  The source terms on the right hand sides in \eqref{Lagrangeeq} are smooth if the function $F$ in Lemma~\ref{lemspecial}  is smooth. Nevertheless the solution $\lambda$ may have large gradients due to boundary effects, e.g., if the boundary has a reentrant corner. We will illustrate this in a numerical experiment in Section~\ref{sectcorner}. 

\end{remark}

\begin{lemma} \label{lemma3}
 Let  $(\bu,\lambda)$ be the solution of problem \eqref{problem3}.  The following estimates hold with $c_{\rm cont}$ defined in \eqref{contA}:
 \begin{align}
   \frac{a(\bu,\bu)}{\|\bu\|_1} & \leq c_{\rm cont} \|\lambda\|_1  \label{estu}\\
   \|\lambda\|_1 & \leq  2c_{\rm cont} a(\bu,\bu)^\frac12. \label{estlambda}
 \end{align}
\end{lemma}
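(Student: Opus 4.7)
The plan is to prove each of the two estimates by suitable testing in the first equation of \eqref{problem3}, exploiting the continuity of $b(\cdot,\cdot)$, the continuity of $a(\cdot,\cdot)$, and the inf-sup property of $b$ that was established in the proof of Lemma~\ref{lemmahulp}.

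For the first estimate \eqref{estu}, I would simply test the first equation of \eqref{problem3} with $\bv = \bu \in \bH^1(\Omega)$. This yields
\[
  a(\bu,\bu) = -b(\lambda,\bu) = -(\lambda,\bu\cdot \hat \bz)_1.
\]
Using Assumption~\ref{ass1} (specifically $\hat \bz \in \bH^{1,\infty}(\Omega)$), the map $\bv \mapsto \bv \cdot \hat \bz$ is continuous from $\bH^1(\Omega)$ into $H^1(\Omega)$, so Cauchy--Schwarz on $(\cdot,\cdot)_1$ combined with this continuity yields $|b(\lambda,\bu)| \leq c\,\|\lambda\|_1 \|\bu\|_1$, and dividing by $\|\bu\|_1$ gives \eqref{estu}.

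For the second estimate \eqref{estlambda}, I would use the inf-sup condition for $b(\cdot,\cdot)$ on $H^1(\Omega) \times \bH_\ast^1(\Omega)$ that was verified in the proof of Lemma~\ref{lemmahulp}:
\[
   \|\lambda\|_1 \leq c \sup_{\bv \in \bH_\ast^1(\Omega)\setminus\{0\}} \frac{b(\lambda,\bv)}{\|\bv\|_1}.
\]
The first equation in \eqref{problem3} holds in particular for every $\bv \in \bH_\ast^1(\Omega) \subset \bH^1(\Omega)$, so $b(\lambda,\bv) = -a(\bu,\bv)$. The continuity of $a(\cdot,\cdot)$ on $\bH^1(\Omega)$ with the Cauchy--Schwarz-type bound $|a(\bu,\bv)| \leq a(\bu,\bu)^{1/2} a(\bv,\bv)^{1/2} \leq C\, a(\bu,\bu)^{1/2} \|\bv\|_1$ then yields \eqref{estlambda} directly.

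Neither step presents a real obstacle; the only mild point of care is that when invoking the inf-sup property one must take the supremum over $\bH_\ast^1(\Omega)$ (not all of $\bH^1(\Omega)$), which is legitimate since the first equation of \eqref{problem3} is satisfied on $\bH^1(\Omega)$ and therefore in particular on the subspace $\bH_\ast^1(\Omega)$. All constants $c$ depend only on $\Omega$ and on the bounds in Assumption~\ref{ass1}.
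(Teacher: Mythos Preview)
Your proof is correct and follows essentially the same approach as the paper: test with $\bv=\bu$ and use continuity of $b$ for \eqref{estu}, and combine the inf-sup property of $b$ on $H^1(\Omega)\times\bH_\ast^1(\Omega)$ with the Cauchy--Schwarz inequality for $a(\cdot,\cdot)$ for \eqref{estlambda}.
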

\begin{proof}
 The estimate in \eqref{estu} follows from
 \[
   a(\bu,\bu)= -b(\lambda,\bu) \leq c_{\rm cont} \|\lambda\|_1 \|\bu\|_1,
 \]
cf. \eqref{contA}. The result in \eqref{estlambda} follows  from $a(\bv,\bv) \leq 4 \int_{\Omega} \nabla \bv:\nabla \bv\, d\bx \leq 4 \|\bv\|_1^2$ and \eqref{infsup}:
\[  \begin{split} 
 \|\lambda\|_1  & \leq c_{\rm cont} \sup_{\bv \in \bH_\ast^1(\Omega)} \frac{b(\lambda,\bv)}{\|\bv\|_1} = c_{\rm cont} \sup_{\bv \in \bH_\ast^1(\Omega)} \frac{a(\bu,\bv)}{\|\bv\|_1} \\
 & \leq c_{\rm cont} \sup_{\bv \in \bH_\ast^1(\Omega)} \frac{a(\bu,\bu)^\frac12 a(\bv,\bv)^\frac12 }{\|\bv\|_1} \leq 2 c_{\rm cont} a(\bu,\bu)^\frac12.
\end{split} \]
\end{proof}
\ \\
From this result we see that $\|\lambda\|_1^2$ is a measure for $a(\bu,\bu)$, i.e., the value of the strain energy functional at the minimizer $\bu$. 

\section{Discretization of the saddle point problem}  \label{secdiscr}
We assume that $\Omega$ is polygonal. Let $\{\T_h\}_{h>0}$ be a family of shape regular (not necessarily quasi-uniform)  simplicial triangulations of $\Omega$. The space of continuous piecewise polynomial functions of degree $k \geq 1$ on $\cT_h$ is denoted by $V_h$ and $\bV_h := V_h^d$ is the corresponding space of vector values finite element functions.
In the discretization we use the bilinear form $b(\cdot,\cdot)$ restricted to the finite element spaces  with kernel denoted by
\[
\ker_h(b):=\{\, \bv_h \in \bV_h~|~b(\lambda_h,\bv_h)=0 \quad \text{for all}~\lambda_h \in V_h\,\}.
\]
Recall that  $\K_{\hat \bz}= \ker(b)\cap K$. A discrete analogue of this space is 
\[
 \K_{\hat \bz,h}:= \ker_h(b)\cap K.
\]
We define 
$\bV_{h,\ast}:= \bV_h \setminus \cK_{\hat \bz,h}=\{\, \bv_h \in \bV_h~|~Q_{\Kzh}\bv_h=0\,\}$. The pair $\bV_{h,\ast} \times V_h$ is suitable for a  Galerkin discretization of \eqref{problem2a}, since $V_h \subset H^1(\Omega)$ and, due to $\cK_{\hat \bz} \subset \cK_{\hat \bz,h}$, also $\bV_{h,\ast} \subset \bH_{\ast}^1(\Omega)$. This pair is used in the analysis in Section~\ref{secerroranalysis}.
Note that $ \bV_{h,\ast} \cap \ker_h(b)= \ker_h(b) \setminus (\ker_h(b) \cap K)$ holds. 

We introduce a discretization of problem \eqref{problem3}: determine $(\bu_h,\lambda_h) \in \bV_h \times V_h$ with $Q_{\cK_{\hat \bz,h}} \bu_h= Q_{\cK_{\hat \bz,h}} \bz$ such that
\begin{equation} \label{problem3discr}
\begin{split}
  a(\bu_h,\bv_h)+ b(\lambda_h, \bv_h) &= 0 \quad \text{for all}~~\bv_h \in \bV_h\\
   b(\mu_h, \bu_h) &= b(\mu_h, \bz)=(\mu_h,z)_1 \quad \text{for all}~~\mu_h \in  V_h.
\end{split}
\end{equation}
Well-posedness of this problem is analyzed below. The analysis uses standard techniques for finite element error analysis of saddle point problems, but we need some additional technical results to also cover the case $\dim(\K_{\hat \bz}) \geq 1$. If we would restrict to the case $\K_{\hat \bz}=\{\mathbf{0}\}$ the results on well-posedness of the discrete problem presented below and the error bound in Theorem~\ref{Thmmain} can be obtained with a simpler analysis.

\subsection{Discretization error analysis} \label{secerroranalysis}
In this section we assume additional regularity of $\hat \bz$, namely $\hat \bz \in \bH^{k+1,\infty}(\Omega)$. In the error analysis below we use $c$ to denote a generic constant that is independent of the mesh size parameter $h$ and whose value can change from one instance to another.
We start with an elementary lemma. We use the orthogonal projections $Q_{V_h}: H^1(\Omega) \to V_h$, $Q_{\bV_h}: \bH^1(\Omega) \to \bV_h$, with respect to $(\cdot,\cdot)_1$. Note that $\bv_h \in \ker_h(b)$ iff $Q_{V_h}(\bv_h \cdot \hat \bz)=0$ holds.
\begin{lemma} \label{lemdiscr1}
 The following estimates hold with constants $c$ independent of $h$:
 \begin{align}
  \|\bv_h \cdot \hatbz - Q_{V_h}(\bv_h \cdot \hatbz)\|_1 & \leq c h \|\bv_h\|_1 \|\hatbz\|_{H^{k+1,\infty}(\Omega)} \quad \text{for all}~\bv_h \in \bV_h,\label{res11}
  \\
   \|\lambda_h \hatbz - Q_{\bV_h}(\lambda_h \hatbz)\|_1 & \leq c h \|\lambda_h \|_1 \|\hatbz\|_{H^{k+1,\infty}(\Omega)} \quad \text{for all}~\lambda_h \in V_h,\label{res11a}\\
 \|\bv\cdot \hatbz - Q_{V_h}(\bv \cdot \hatbz)\|_1 & \leq c h^k \|\bv\|_1 \|\hatbz\|_{H^{k+1,\infty}(\Omega)} \quad \text{for all}~\bv \in K. \label{res12}
 \end{align}
\end{lemma}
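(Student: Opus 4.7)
The plan is to exploit that $Q_{V_h}$ and $Q_{\bV_h}$ are best-approximation operators in the $\|\cdot\|_1$ norm, so it suffices to exhibit an explicit element of $V_h$ (resp.\ $\bV_h$) with the desired approximation property. I would use the nodal Lagrange interpolants $I_h: C^0(\bar\Omega) \to V_h$ and $\tilde I_h: C^0(\bar\Omega)^d \to \bV_h$, which are well-defined since all of $\bv_h\cdot\hatbz$, $\lambda_h \hatbz$, and $\bv\cdot\hatbz$ are continuous. The best-approximation property gives $\|u - Q_{V_h} u\|_1 \le \|u - I_h u\|_1$, so (1)--(3) reduce to local interpolation estimates on each $T \in \mathcal T_h$ by the standard Bramble--Hilbert bound
\[
\|u - I_h u\|_{H^1(T)} \le c\, h_T^{\,k}\, |u|_{H^{k+1}(T)}.
\]

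For (1), I would expand $D^{k+1}(\bv_h \cdot \hatbz)$ via the Leibniz rule; the decisive observation is that $D^{k+1}\bv_h \equiv 0$ on $T$, so the $j=0$ term drops out and
\[
D^{k+1}(\bv_h \cdot \hatbz) = \sum_{j=1}^{k+1} \binom{k+1}{j}\, D^{k+1-j} \bv_h \otimes D^j \hatbz.
\]
Each surviving summand contains a derivative $D^{k+1-j}\bv_h$ of order between $0$ and $k$, which the inverse inequality $\|D^m \bv_h\|_{L^2(T)} \le c\, h_T^{1-m} \|\nabla \bv_h\|_{L^2(T)}$ for $m\ge 1$ (with $m=0$ handled directly) controls by $c\, h_T^{j-k}\, \|\bv_h\|_{H^1(T)}$. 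Combining with $\|D^j \hatbz\|_{L^\infty} \le \|\hatbz\|_{H^{k+1,\infty}(\Omega)}$, every summand is bounded by $c\, h_T^{1-k} \|\bv_h\|_{H^1(T)}\, \|\hatbz\|_{H^{k+1,\infty}(\Omega)}$ (the worst case being $j=1$), so the Bramble--Hilbert factor $h_T^k$ yields a local bound of order $h_T$. Summing over $T$ gives (1). For (2) the same argument applies verbatim to $\lambda_h \hatbz$ componentwise, using $D^{k+1}\lambda_h \equiv 0$ and the analogous Leibniz expansion.

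For (3), $\bv \in K$ is a rigid motion and in particular affine, so $D^m \bv \equiv 0$ for $m \ge 2$ and only the Leibniz terms of orders $(1,k)$ and $(0,k+1)$ survive. Consequently $\bv\cdot\hatbz \in H^{k+1}(\Omega)$ globally, with $|\bv \cdot \hatbz|_{H^{k+1}(\Omega)} \le c\, \|\bv\|_{W^{1,\infty}(\Omega)}\, \|\hatbz\|_{H^{k+1,\infty}(\Omega)}$. Finite-dimensionality of $K$ yields $\|\bv\|_{W^{1,\infty}(\Omega)} \le c\, \|\bv\|_1$, and the standard global approximation bound $\|u - Q_{V_h} u\|_1 \le c\, h^k\, |u|_{H^{k+1}(\Omega)}$ delivers (3). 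The only real technical point is the Leibniz plus inverse-inequality bookkeeping in (1) and (2); the key algebraic cancellation $h_T^k \cdot h_T^{1-k} = h_T$ is what produces the linear rate despite the $h$-dependence hidden inside $|u|_{H^{k+1}(T)}$, and the whole argument rests on the fact that the discrete factor loses one derivative $(D^{k+1} = 0)$ that would otherwise be the obstruction.
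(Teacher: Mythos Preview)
Your proposal is correct and follows essentially the same route as the paper: best approximation by $Q_{V_h}$ reduces to the nodal interpolant, then the local Bramble--Hilbert estimate combined with the Leibniz expansion, the cancellation $D^{k+1}\bv_h=0$, and the inverse inequality handle \eqref{res11}--\eqref{res11a}, while the affine structure of rigid motions handles \eqref{res12}. The only cosmetic difference is that for \eqref{res12} the paper argues elementwise, whereas you invoke finite-dimensionality of $K$ to pass from $W^{1,\infty}$ to $H^1$ globally; both yield the same bound.
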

\begin{proof}
 Take $\bv_h \in \bV_h$. Let $I_{V_h}$ be the nodal interpolation operator in $V_h$. On an element $T \in \T_h$ we get, using a finite element inverse inequality and $|\bv_h|_{k+1,T}=0$:
 \begin{align*}
  & \|\bv_h \cdot \hatbz - I_{V_h}(\bv_h \cdot \hatbz)\|_{H^1(T)}  \leq c\, h_T^k |\bv_h \cdot \hatbz|_{k+1,T} \leq c \, h_T^k \sum_{\ell=0}^{k+1} |\bv_h|_{\ell,T}|\hatbz|_{k+1-\ell,\infty,T} \\
  & \qquad  \leq c \,  h_T^k \sum_{\ell=0}^{k} |\bv_h|_{\ell,T}|\hatbz|_{k+1-\ell,\infty,T} \\ & \qquad \leq c\,  h_T \|\bv_h\|_{H^1(T)} \sum_{\ell=0}^{k}|\hatbz|_{k+1-\ell,\infty,T} 
  \leq c\,  h_T \|\bv_h\|_{H^1(T)} \|\hatbz\|_{H^{k+1,\infty}(T)}.
 \end{align*}
Squaring this,  summing over $T \in \T_h$ and using $\|\bv_h \cdot \hatbz - Q_{V_h}(\bv_h \cdot \hatbz)\|_1 \leq \|\bv_h \cdot \hatbz - I_{V_h}(\bv_h \cdot \hatbz)\|_1 $ we obtain the result \eqref{res11}. With the same arguments we obtain the result in \eqref{res11a}.
\\
Note that $\bv \in K$ is a constant or linear vector function and thus $|\bv|_{\ell,T}=0$ for $\ell \geq 2$. Using this we
get 
\begin{align*}
  & \|\bv \cdot \hatbz - I_{V_h}(\bv \cdot \hatbz)\|_{H^1(T)}\leq c\,  h_T^k |\bv \cdot \hatbz|_{k+1,T}  \leq c\,  h_T^k \sum_{\ell=0}^{k+1} |\bv|_{\ell,T}|\hatbz|_{k+1-\ell,\infty,T} \\
 & \qquad \leq c\, h_T^k \sum_{\ell=0}^{1} |\bv|_{\ell,T}|\hatbz|_{k+1-\ell,\infty,T} \leq c\, h_T^k \|\bv\|_{H^1(T)} \|\hatbz\|_{H^{k+1,\infty}(T)}.
\end{align*}
With the same arguments as above we obtain the estimate \eqref{res12}.
\end{proof}
\ \\[1ex]
We now quantify the difference between the space $\cK_{\hat \bz}$ and $\cK_{\hat \bz,h}$. Both $\K_{\hat \bz}$ and $\K_{\hat \bz,h}$ are linear subspaces of $K$ and $\K_{\hat \bz} \subset \K_{\hat \bz,h}$ holds (this follows using $K \subset \bV_h$). For distance between  subspaces we use the usual angle notion and define ${\rm dist}(\K_{\hat \bz,h},\K_{\hat \bz}):= \sup_{\bw \in \K_{\hat \bz,h}, \|\bw\|_1=1} \inf_{\bv \in \K_{\hat \bz}} \|\bw- \bv\|_1$. Note that ${\rm dim}(\K_{\hat \bz,h}) > {\rm dim}(\K_{\hat \bz})$ implies ${\rm dist}(\K_{\hat \bz,h},\K_{\hat \bz})=1$. 
\begin{lemma} \label{lemdiscr2}
 The following holds with a constant $c$ independent of $h$:
 \begin{equation} \label{dist}
  {\rm dist}(\K_{\hat \bz,h},\K_{\hat \bz}) \leq c h^k.
 \end{equation}
This implies that for $h$ sufficiently small $\K_{\hat \bz}=\K_{\hat \bz,h}$ holds.
\end{lemma}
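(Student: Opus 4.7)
The plan is to reduce everything to the finite-dimensional space $K$, exploit the estimate \eqref{res12} which is tailored exactly to elements of $K$, and invoke norm equivalence on a finite-dimensional space to conclude.

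First I would rewrite the kernel condition in a usable form. Since $b(\lambda_h,\bv_h) = (\lambda_h, \bv_h\cdot\hat\bz)_1 = (\lambda_h, Q_{V_h}(\bv_h\cdot\hat\bz))_1$, the condition $\bv_h\in\ker_h(b)$ is equivalent to $Q_{V_h}(\bv_h\cdot\hat\bz)=0$. Hence a rigid motion $\bw\in K$ lies in $\cK_{\hat\bz,h}$ precisely when $Q_{V_h}(\bw\cdot\hat\bz)=0$, while $\bv\in\cK_{\hat\bz}$ means $\bv\cdot\hat\bz=0$ a.e.\ on $\Omega$. Since both $\cK_{\hat\bz}$ and $\cK_{\hat\bz,h}$ are subspaces of the finite-dimensional space $K$ of rigid motions, I would decompose $K = \cK_{\hat\bz} \oplus \cK_{\hat\bz}^{\perp_K}$ orthogonally with respect to $(\cdot,\cdot)_1$.

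Next I would establish the key auxiliary norm equivalence. Consider the linear map $T:\cK_{\hat\bz}^{\perp_K}\to H^1(\Omega)$ given by $T\bv := \bv\cdot\hat\bz$; it is well-defined because $\hat\bz\in\bH^{1,\infty}(\Omega)$ and $\bv$ is a polynomial of degree $\leq 1$. It is injective: $T\bv=0$ forces $\bv\in K\cap V_{\hat\bz}^0 = \cK_{\hat\bz}$, and the only element of $\cK_{\hat\bz}^{\perp_K}$ there is $0$. Since $\cK_{\hat\bz}^{\perp_K}$ is finite-dimensional and $T$ is continuous and injective, there exists a constant $c_0>0$, \emph{independent of $h$}, such that
\begin{equation} \label{planNormEq}
  \|\bv\|_1 \leq c_0 \|\bv\cdot\hat\bz\|_1 \quad \text{for all}~\bv\in\cK_{\hat\bz}^{\perp_K}.
\end{equation}

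Now I would prove the distance estimate \eqref{dist}. Take an arbitrary $\bw\in\cK_{\hat\bz,h}$ with $\|\bw\|_1=1$ and decompose $\bw=\bw_0+\bw_1$ with $\bw_0\in\cK_{\hat\bz}$ and $\bw_1\in\cK_{\hat\bz}^{\perp_K}$. Since $\bw_0\cdot\hat\bz=0$, we have $\bw\cdot\hat\bz=\bw_1\cdot\hat\bz$. Applying \eqref{res12} to $\bw\in K$ and using $Q_{V_h}(\bw\cdot\hat\bz)=0$ gives
\begin{equation*}
  \|\bw_1\cdot\hat\bz\|_1 = \|\bw\cdot\hat\bz - Q_{V_h}(\bw\cdot\hat\bz)\|_1 \leq c h^k \|\bw\|_1 \|\hat\bz\|_{H^{k+1,\infty}(\Omega)} = c h^k \|\hat\bz\|_{H^{k+1,\infty}(\Omega)}.
\end{equation*}
Combining with \eqref{planNormEq} applied to $\bw_1$ yields $\|\bw-\bw_0\|_1=\|\bw_1\|_1\leq c_0 c h^k \|\hat\bz\|_{H^{k+1,\infty}(\Omega)}$, so $\inf_{\bv\in\cK_{\hat\bz}}\|\bw-\bv\|_1\leq c h^k$, proving \eqref{dist}.

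Finally, for the second assertion, I would argue by contradiction. If $\dim(\cK_{\hat\bz,h})>\dim(\cK_{\hat\bz})$, there exists $\bw\in\cK_{\hat\bz,h}$ with $\|\bw\|_1=1$ that is $(\cdot,\cdot)_1$-orthogonal to $\cK_{\hat\bz}$, and then $\inf_{\bv\in\cK_{\hat\bz}}\|\bw-\bv\|_1=1$, giving $\mathrm{dist}(\cK_{\hat\bz,h},\cK_{\hat\bz})=1$, which contradicts \eqref{dist} for $h$ sufficiently small. Combined with the inclusion $\cK_{\hat\bz}\subset\cK_{\hat\bz,h}$, equality of the spaces follows. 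The only subtle point, and the one I expect to require the most care, is the $h$-independence of the constant in \eqref{planNormEq}; this is genuinely automatic because $\cK_{\hat\bz}^{\perp_K}$ is a fixed finite-dimensional space determined entirely by $\Omega$ and $\hat\bz$, but it is worth stating explicitly since the whole argument would collapse otherwise.
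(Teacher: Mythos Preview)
Your proposal is correct and follows essentially the same route as the paper: the orthogonal decomposition $K=\cK_{\hat\bz}\oplus\cK_{\hat\bz}^{\perp}$, injectivity of $\bv\mapsto\bv\cdot\hat\bz$ on the finite-dimensional complement to obtain an $h$-independent lower bound, and then estimate \eqref{res12} combined with $Q_{V_h}(\bw\cdot\hat\bz)=0$ for $\bw\in\cK_{\hat\bz,h}$. The only cosmetic difference is that the paper phrases the injective map as $B:K\to H^{-1}(\Omega)$, $(B\bw)(\mu)=(\mu,\bw\cdot\hat\bz)_1$, and bounds $\|B\bw\|_{H^{-1}}$, whereas you work directly with $\|\bw\cdot\hat\bz\|_1$; since here $H^{-1}$ is the dual of $H^1$ these quantities coincide by Riesz, so the arguments are in fact the same.
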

\begin{proof}
We use the orthogonal decomposition $K= \cK_{\hat \bz} \oplus \cK_{\hat \bz}^\perp$ with  $\cK_{\hat \bz}^\perp:= \{\, \bv \in K~|~(\bv,\bw)_1=0 \quad \text{for all}~\bw \in \K_{\hat \bz}\,\}$.  Define the bounded linear operator $B: \, K \to H^{-1}(\Omega)$ by $(B \bw)(\mu):= b(\mu, \bw)=(\mu, \bw \cdot \hat \bz)_1$. Hence, $\ker (B)= \cK_{\hat \bz}$ and thus $B$ is injective on the finite dimensional space $\cK_{\hat \bz}^\perp$. This implies that there exists a constant $c_0$ such that $\|\bv\|_1 \leq c_0 \|B\bv\|_{H^{-1}}$ holds for all $\bv \in \cK_{\hat \bz}^\perp$. Take $\bw \in \K_{\hat \bz,h} \subset K$ with $\|\bw\|_1=1$ and decompose as $\bw=\bw_1 +\bw_2$, $\bw_1\in \cK_{\hat \bz}$, $\bw_2 \in \cK_{\hat \bz}^\perp$. 
 Recall  that $\bv \in \K_{\hat \bz,h}$ implies that  $Q_{V_h}(\bv \cdot \hatbz)=0$ holds. Using this and \eqref{res12} we obtain
 \begin{align*}
   (B \bw)(\mu) & =(\mu, \bw \cdot \hat \bz)_1= \big(\mu, \bw \cdot \hatbz- Q_{V_h}(\bw \cdot \hatbz)\big)_1 \leq c h^k \|\mu\|_1.
\end{align*}
Thus we get the estimate $\|B\bw \|_{H^{-1}} \leq c h^k$. This yields
\[
 \inf_{\bv \in \K_{\hat \bz}} \|\bw- \bv\|_1 = \|\bw_2\|_1 \leq c_0 \|B\bw_2\|_{H^{-1}} = c_0\|B\bw\|_{H^{-1}} \leq c h^k,
\]
which completes the proof.
\end{proof}
\ \\
In the analysis below we need that the spaces $\ker (b) \setminus \cK_{\hat \bz}$ and its discrete analogue $\ker_h(b) \setminus \cK_{\hat \bz,h}$ are 
``close'' (for $h$ sufficiently small). The following lemma provides a useful result. 
\begin{lemma} \label{lemcomp}
 For arbitrary $\bv_h \in \ker_h(b) \setminus \cK_{\hat \bz,h}$ there exists $\bv \in \ker(b) \setminus \cK_{\hat \bz}$ such that
 \[
   \|\bv_h - \bv\|_1 \leq c h \|\bv_h\|_1
 \]
holds with a constant $c$ independent of $h$, $\bv_h$. 
\end{lemma}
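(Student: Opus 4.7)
The key observation is that $\bv_h \in \ker_h(b)$ means $Q_{V_h}(\bv_h \cdot \hatbz) = 0$, so by \eqref{res11} we immediately have
\[
\|\bv_h \cdot \hatbz\|_1 = \|\bv_h \cdot \hatbz - Q_{V_h}(\bv_h \cdot \hatbz)\|_1 \leq c h \|\bv_h\|_1.
\]
This tells us that $\bv_h$ is almost tangential to $\hatbz$, so subtracting off its $\hatbz$-component should give an admissible approximant in $\ker(b)$.

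The plan is to produce $\bv$ in two stages. First, I will set $\tilde \bv := \bv_h - (\bv_h \cdot \hatbz)\hatbz$. Since $\|\hatbz\| = 1$ a.e., a direct computation gives $\tilde\bv \cdot \hatbz = 0$ a.e., so $\tilde\bv \in \ker(b)$. Using Assumption~\ref{ass1} (i.e., $\hatbz \in \bH^{1,\infty}(\Omega)$) together with the product rule, the factor $\hatbz$ can be pulled out in the $H^1$-norm, giving
\[
\|\bv_h - \tilde\bv\|_1 = \|(\bv_h \cdot \hatbz)\hatbz\|_1 \leq c \|\hatbz\|_{H^{1,\infty}(\Omega)} \|\bv_h \cdot \hatbz\|_1 \leq c h \|\bv_h\|_1,
\]
using the displayed bound above.

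Second, I will correct $\tilde \bv$ to lie in $\ker(b) \setminus \cK_{\hat \bz}$ by setting $\bv := \tilde\bv - Q_{\cK_{\hat \bz}}\tilde\bv$. Since $\cK_{\hat \bz} \subset \ker(b)$, this $\bv$ belongs to $\ker(b) \setminus \cK_{\hat \bz}$. To bound $\|Q_{\cK_{\hat \bz}}\tilde\bv\|_1$, I will use the containment $\cK_{\hat \bz} \subset \cK_{\hat \bz,h}$ together with the assumption $\bv_h \in \ker_h(b) \setminus \cK_{\hat \bz,h}$, which gives $Q_{\cK_{\hat \bz,h}}\bv_h = 0$ and hence $Q_{\cK_{\hat \bz}}\bv_h = 0$. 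Therefore
\[
Q_{\cK_{\hat \bz}}\tilde\bv = Q_{\cK_{\hat \bz}}(\tilde\bv - \bv_h) = -Q_{\cK_{\hat \bz}}\bigl((\bv_h \cdot \hatbz)\hatbz\bigr),
\]
and boundedness of the projection yields $\|Q_{\cK_{\hat \bz}}\tilde\bv\|_1 \leq ch\|\bv_h\|_1$. The triangle inequality $\|\bv_h - \bv\|_1 \leq \|\bv_h - \tilde\bv\|_1 + \|Q_{\cK_{\hat \bz}}\tilde\bv\|_1$ then delivers the claim.

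There is no real obstacle here since everything reduces to the approximation estimate \eqref{res11}; the only point requiring care is the second stage, where one must use the orthogonality of $\bv_h$ against the \emph{discrete} kernel $\cK_{\hat \bz,h}$ (rather than the continuous one) together with the containment $\cK_{\hat \bz} \subset \cK_{\hat \bz,h}$ established at the beginning of Section~\ref{secerroranalysis}. Note that this lemma does not require the smallness of $h$ from Lemma~\ref{lemdiscr2}, which is why the containment (and not the equality) of the kernels is used.
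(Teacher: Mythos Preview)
Your proof is correct and follows essentially the same approach as the paper: the same intermediate element $\tilde\bv = \bv_h - (\bv_h\cdot\hatbz)\hatbz \in \ker(b)$, the same correction $\bv = \tilde\bv - Q_{\cK_{\hat\bz}}\tilde\bv$, and the same use of \eqref{res11} together with $Q_{\cK_{\hat\bz}}\bv_h = 0$ (via $\cK_{\hat\bz}\subset\cK_{\hat\bz,h}$). The only cosmetic difference is that the paper writes $\bv_h - \bv = (I - Q_{\cK_{\hat\bz}})(\bv_h - \tilde\bv)$ directly and bounds this in one step, whereas you split it via the triangle inequality; the content is the same.
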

\begin{proof}
Take $\bv_h \in \ker_h(b) \setminus \cK_{\hat \bz,h}$, i.e. $Q_{V_h}(\bv_h \cdot \hat \bz)=0$ and $Q_{\cK_{\hat \bz,h}}\bv_h =0$. Due to $\cK_{\hat \bz} \subset \cK_{\hat \bz,h}$ we also have $Q_{\cK_{\hat \bz}}\bv_h =0$. Define $\tilde \bv_h=\bv_h - (\bv_h \cdot \hat \bz)\hat \bz \in \ker(b)$ and $\bv=\tilde \bv_h - Q_{\cK_{\hat \bz}}\tilde \bv_h \in \ker (b) \setminus \cK_{\hat \bz}$. We then obtain
 \begin{align*}
  \|\bv_h - \bv \|_1 & =\|(I-Q_{\cK_{\hat \bz}})(\bv_h-\tilde\bv_h)\|_1 \leq \|(\bv_h \cdot \hat \bz) \hat \bz\|_1 \\
  &  \leq c \|\bv_h \cdot \hat \bz\|_1\|\hat \bz\|_{H^{1,\infty}} \leq c\|\bv_h \cdot \hat \bz -Q_{V_h}(\bv_h\cdot \hat \bz)\|_1 \leq c h \|\bv_h\|_1,
 \end{align*}
where in the last inequality we used \eqref{res11}.
\end{proof}
\ \\
We now derive discrete ellipticity and inf-sup properties.
\begin{lemma} \label{lemellip}
 There exists $\gamma >0$ independent of $h$ such that 
 \begin{equation}\label{ellip}
  a(\bv_h,\bv_h) \geq \gamma \|\bv_h\|_1^2 \quad \text{for all}~ \bv_h \in  \bV_{h,\ast} \cap \ker_h(b). 
 \end{equation}
\end{lemma}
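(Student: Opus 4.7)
The plan is to transfer the continuous ellipticity of $a(\cdot,\cdot)$ on $\ker(b) \setminus \cK_{\hat \bz}$ (established in the proof of Lemma~\ref{lemmahulp}) to the discrete subspace $\bV_{h,\ast} \cap \ker_h(b)$ via the approximation result from Lemma~\ref{lemcomp}. Because $\ker_h(b)$ is \emph{not} a subspace of $\ker(b)$, we cannot directly inherit ellipticity; instead we will exploit the fact that each discrete test function is $O(h)$-close in $\bH^1(\Omega)$ to a continuous one in $\ker(b) \setminus \cK_{\hat \bz}$.

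First I would fix $\bv_h \in \bV_{h,\ast} \cap \ker_h(b)$. By Lemma~\ref{lemcomp} there exists $\bv \in \ker(b) \setminus \cK_{\hat \bz}$ with $\|\bv_h - \bv\|_1 \leq c h \|\bv_h\|_1$. Denoting the continuous ellipticity constant by $\gamma_0 > 0$ (from Lemma~\ref{lemmahulp}) and the continuity constant of $a(\cdot,\cdot)$ on $\bH^1(\Omega)$ by $M$, I would chain the estimates:
\begin{equation*}
\|\bv\|_1 \geq \|\bv_h\|_1 - \|\bv_h - \bv\|_1 \geq (1 - ch)\|\bv_h\|_1,
\end{equation*}
and then, using the elementary inequality $a(\bv,\bv) \leq 2\, a(\bv_h,\bv_h) + 2\, a(\bv-\bv_h,\bv-\bv_h)$ together with continuity of $a$,
\begin{equation*}
\gamma_0 (1-ch)^2 \|\bv_h\|_1^2 \;\leq\; \gamma_0 \|\bv\|_1^2 \;\leq\; a(\bv,\bv) \;\leq\; 2\, a(\bv_h,\bv_h) + 2 M c^2 h^2 \|\bv_h\|_1^2.
\end{equation*}
Rearranging yields $a(\bv_h,\bv_h) \geq \tfrac12\bigl(\gamma_0 (1-ch)^2 - 2 M c^2 h^2\bigr) \|\bv_h\|_1^2$, and for $h \leq h_0$ sufficiently small the bracketed quantity is bounded below by a positive constant $\gamma$ independent of $h$. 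Choosing $\gamma$ appropriately (and possibly shrinking it further to absorb the small-$h$ regime if needed, using a separate compactness argument, though here the estimate is uniform) gives the desired bound.

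The main obstacle I anticipate is the mismatch between $\ker_h(b)$ and $\ker(b)$: without Lemma~\ref{lemcomp} one would have to resort to a contradiction/compactness argument of the type used in classical mixed finite element theory, which would be substantially more delicate given that $\dim(\cK_{\hat \bz,h})$ may a priori exceed $\dim(\cK_{\hat \bz})$. The force of Lemma~\ref{lemcomp} is precisely that it reduces the problem to an $O(h)$ perturbation of the continuous setting, after which the ellipticity transfer is a routine Strang-type argument. A minor subtlety is that $\gamma$ must be independent of $h$; this is automatic from the calculation above since $\gamma_0$, $M$, and $c$ are all $h$-independent.
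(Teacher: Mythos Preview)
Your proposal is correct and follows essentially the same route as the paper: both invoke Lemma~\ref{lemcomp} to obtain a $\bv \in \ker(b)\setminus\cK_{\hat\bz}$ that is $O(h)$-close to $\bv_h$, and then transfer the continuous ellipticity via a perturbation argument. The only cosmetic difference is that the paper works with the seminorm $\|\cdot\|_a := a(\cdot,\cdot)^{1/2}$ and applies the reverse triangle inequality $\|\bv_h\|_a \geq \|\bv\|_a - \|\bv_h-\bv\|_a$ directly, whereas you square things up via $a(\bv,\bv) \leq 2a(\bv_h,\bv_h) + 2a(\bv-\bv_h,\bv-\bv_h)$; both yield the same conclusion for $h$ sufficiently small.
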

\begin{proof}
 Note that $\bV_{h,\ast} \cap \ker_h(b)= \ker_h(b) \setminus (\ker_h(b) \cap K)=\ker_h(b) \setminus \cK_{\hat \bz,h}$ and $K$ is the kernel of $a(\cdot,\cdot)$. Hence, there exists a possibly $h$-dependent $\gamma_h >0$ such that \eqref{ellip} holds with $\gamma$ replaced by  $\gamma_h$. It remains to show that $\gamma_h$ is uniformly in $h$ bounded away from $0$. 
 We use that $a(\cdot,\cdot)$ is elliptic on $\ker (b)\setminus \cK_{\hat \bz}$, i.e. for suitable $\hat \gamma >0$ we have $a(\bv,\bv) \geq \hat \gamma \|\bv\|_1^2$ for all $\bv \in \ker (b) \setminus \cK_{\hat \bz}$. The seminorm corresponding to $a(\cdot,\cdot)$ is denoted by $\|\cdot\|_a$. 
 Take $\bv_h \in \ker_h(b) \setminus \cK_{\hat \bz,h}$ and a  corresponding $\bv\in \ker (b) \setminus \cK_{\hat \bz}$ as in Lemma~\ref{lemcomp}. Using $\|\bv_h -\bv\|_a \leq 2 \|\bv_h-\bv\|_1$ we obtain, with suitable constants $c>0$,
 \begin{align*}
  a(\bv_h,\bv_h)^\frac12 & =\|\bv_h\|_a  \geq \|\bv\|_a - \|\bv_h-\bv\|_a \geq \hat \gamma^\frac12 \|\bv\|_1 - 2 \|\bv_h-\bv\|_1 \\
   & \geq \hat \gamma^\frac12 \|\bv_h\|_1 - c \|\bv_h -\bv\|_1 \geq \hat \gamma^\frac12 \|\bv_h\|_1(1- ch),
 \end{align*}
 and combining this with $a(\bv_h,\bv_h) \geq \gamma_h \|\bv_h\|_1^2$ completes the proof. 
\end{proof}

\begin{lemma}
There is  a constant $\beta >0$, independent of $h$ such that for $h$ sufficiently small the following holds:
\begin{equation} \label{discrinfsup}
  \sup_{\bv_h \in \bV_{h,\ast}} \frac{b(\lambda_h, \bv_h)}{\|\bv_h\|_1} \geq \beta \|\lambda_h\|_1 \quad \text{for all}~\lambda_h \in V_h.
\end{equation}
\end{lemma}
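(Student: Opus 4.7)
The plan is to adapt the continuous proof from Lemma~\ref{lemmahulp} by means of a discrete Fortin-type lift. In the continuous case the inf-sup witness was $\bv^\ast = \hat\bz\lambda - Q_{\cK_{\hat\bz}}(\hat\bz\lambda) \in \bH_\ast^1(\Omega)$, giving $b(\lambda,\bv^\ast)=\|\lambda\|_1^2$. At the discrete level I would use the natural discrete analogue
\[
  \bv_h := (I - Q_{\cK_{\hat\bz,h}})\, Q_{\bV_h}(\hat\bz\lambda_h),
\]
which belongs to $\bV_{h,\ast}$ by construction. Its norm is controlled by $H^1$-stability of the two projections and Assumption~\ref{ass1}: $\|\bv_h\|_1 \le 2\|Q_{\bV_h}(\hat\bz\lambda_h)\|_1 \le 2\|\hat\bz\lambda_h\|_1 \le c\|\hat\bz\|_{H^{1,\infty}}\|\lambda_h\|_1$.

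Next I would evaluate $b(\lambda_h,\bv_h)$. Since $\cK_{\hat\bz,h}\subset \ker_h(b)$, the $Q_{\cK_{\hat\bz,h}}$-part drops out and $b(\lambda_h,\bv_h)=b(\lambda_h, Q_{\bV_h}(\hat\bz\lambda_h))$. Splitting,
\[
  b(\lambda_h,Q_{\bV_h}(\hat\bz\lambda_h)) = (\lambda_h, \hat\bz\lambda_h\cdot\hat\bz)_1 + (\lambda_h, (Q_{\bV_h}(\hat\bz\lambda_h)-\hat\bz\lambda_h)\cdot\hat\bz)_1,
\]
and using $\|\hat\bz\|=1$ a.e.\ the first term equals $\|\lambda_h\|_1^2$. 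The second is estimated by Cauchy--Schwarz together with estimate \eqref{res11a}:
\[
  |(\lambda_h, (Q_{\bV_h}(\hat\bz\lambda_h)-\hat\bz\lambda_h)\cdot\hat\bz)_1| \le \|\lambda_h\|_1\,\|\hat\bz\|_{H^{1,\infty}}\,\|Q_{\bV_h}(\hat\bz\lambda_h)-\hat\bz\lambda_h\|_1 \le ch\,\|\lambda_h\|_1^2.
\]

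Combining these, $b(\lambda_h,\bv_h)\ge (1-ch)\|\lambda_h\|_1^2$ while $\|\bv_h\|_1 \le c\|\lambda_h\|_1$, so for $h$ small enough (say $ch \le 1/2$) the ratio is bounded below by $\beta := 1/(2c)$, independent of $h$, proving \eqref{discrinfsup}.

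The conceptual hurdle is handling a possibly nontrivial kernel $\cK_{\hat\bz}$. The naive discrete lift $\hat\bz\lambda_h - Q_{\cK_{\hat\bz}}(\hat\bz\lambda_h)$ is not a finite-element function, and simply projecting it into $\bV_h$ need not lie in $\bV_{h,\ast}$. The crucial observation that bypasses this is that the discrete kernel $\cK_{\hat\bz,h}$, which by definition is contained in $\ker_h(b)$, is precisely what must be subtracted: doing so both guarantees $\bv_h \in \bV_{h,\ast}$ and leaves $b(\lambda_h,\bv_h)$ unchanged. One does not need fine comparison between $\cK_{\hat\bz}$ and $\cK_{\hat\bz,h}$ here (although Lemma~\ref{lemdiscr2} already ensures these coincide for small $h$); all that is used is the inclusion $\cK_{\hat\bz,h}\subset\ker_h(b)$ and the first-order approximation estimate \eqref{res11a}, which is why $k\ge 1$ suffices.
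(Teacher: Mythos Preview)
Your proof is correct and essentially identical to the paper's: the paper also takes $\bv_h=(I-Q_{\cK_{\hat\bz,h}})Q_{\bV_h}(\hat\bz\lambda_h)$, uses $\cK_{\hat\bz,h}\subset\ker_h(b)$ to drop the kernel part from $b$, splits $b(\lambda_h,Q_{\bV_h}(\hat\bz\lambda_h))=b(\lambda_h,\hat\bz\lambda_h)-b(\lambda_h,\hat\bz\lambda_h-Q_{\bV_h}(\hat\bz\lambda_h))$, and bounds the remainder by \eqref{res11a} to get $b(\lambda_h,\bv_h)\ge(1-ch)\|\lambda_h\|_1^2$. One cosmetic point: since $I-Q_{\cK_{\hat\bz,h}}$ is an orthogonal projection you actually have $\|\bv_h\|_1\le\|Q_{\bV_h}(\hat\bz\lambda_h)\|_1$ rather than $2\|\cdot\|_1$, but this is immaterial.
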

\begin{proof}
Take $\lambda_h \in V_h$. Define $\bv_h := Q_{\bV_h}(\lambda_h \hat \bz)- Q_{\cK_{\hat \bz,h}} Q_{\bV_h}(\lambda_h \hat \bz) \in \bV_{h,\ast}$. Note that $\|\bv_h\|_1 \leq c \|\lambda_h\|_1$ holds, cf. \eqref{Cd}. Note that $\cK_{\hat \bz,h} \subset \ker_h(b)$ and thus $b(\lambda_h, Q_{\cK_{\hat \bz,h}} \bw_h)=0$ holds for all $\bw_h \in \bV_h$. Using this and \eqref{res11a} we obtain, with suitable constants $c>0$, 
\begin{align*}
 b(\lambda_h, \bv_h) & = b(\lambda_h, Q_{\bV_h}(\lambda_h \hat \bz)) \\
                    &= b(\lambda_h, \lambda_h \hat \bz) - b(\lambda_h, \lambda_h \hat \bz-Q_{\bV_h}(\lambda_h \hat \bz)) \\
                    & \geq \|\lambda_h\|_1^2 - c \|\lambda_h\|_1 \|\lambda_h \hat \bz-Q_{\bV_h}(\lambda_h \hat \bz)\|_1 \\
                    & \geq (1-ch)\|\lambda_h\|_1^2.
\end{align*}
Combining these results completes the proof.
\end{proof}
 \begin{assumption}
  In the remainder we assume that $h$ is sufficiently small, such that the discrete inf-sup property \eqref{discrinfsup} holds. 
 \end{assumption}
\ \\[1ex]
Using this we derive a well-posedness result for the discrete problem \eqref{problem3discr}.
\begin{lemma} \label{Wellposed2}
The problem \eqref{problem3discr} has a unique solution.
\end{lemma}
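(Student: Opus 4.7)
The plan is to reduce (\ref{problem3discr}) to a standard Brezzi-type saddle point problem on the pair $\bV_{h,\ast}\times V_h$, where the two structural hypotheses (discrete coercivity on $\ker_h(b)$ and the discrete inf--sup condition) have just been established, and then to translate the result back to the original formulation by a shift.

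First I would write $\bu_h = Q_{\Kzh}\bz + \tilde\bu_h$ with $\tilde\bu_h \in \bV_{h,\ast}$, which is forced by the side condition $Q_{\Kzh}\bu_h = Q_{\Kzh}\bz$. Since $Q_{\Kzh}\bz \in K$ and $\Kzh \subset \ker_h(b)$, the contributions $a(Q_{\Kzh}\bz,\cdot)$ and $b(\cdot,Q_{\Kzh}\bz)$ both vanish. Moreover, for test directions $\bv_h \in \Kzh \subset K\cap \ker_h(b)$ both $a(\bu_h,\bv_h)$ and $b(\lambda_h,\bv_h)$ are zero, so the first equation in (\ref{problem3discr}) is automatically satisfied on $\Kzh$ and may be restricted to test functions in $\bV_{h,\ast}$. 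This produces the equivalent reduced problem: find $(\tilde\bu_h,\lambda_h)\in \bV_{h,\ast}\times V_h$ such that $a(\tilde\bu_h,\bv_h) + b(\lambda_h,\bv_h) = 0$ for all $\bv_h\in \bV_{h,\ast}$, and $b(\mu_h,\tilde\bu_h) = (\mu_h,z)_1$ for all $\mu_h\in V_h$.

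At this point the standard Brezzi theorem applies: continuity of $a$ and $b$ on the ambient Sobolev spaces is immediate from their definitions, discrete coercivity of $a$ on $\bV_{h,\ast}\cap \ker_h(b)$ is Lemma~\ref{lemellip}, and the uniform discrete inf--sup property is (\ref{discrinfsup}). Therefore the reduced system has a unique solution $(\tilde\bu_h,\lambda_h)$, and $\bu_h := Q_{\Kzh}\bz + \tilde\bu_h$ together with $\lambda_h$ solves (\ref{problem3discr}). Uniqueness for the original formulation follows because the difference of any two solutions satisfies the homogeneous constraint $Q_{\Kzh}(\bu_h^{(1)}-\bu_h^{(2)})=0$; hence the velocity difference lies in $\bV_{h,\ast}$ and the pair of differences solves the homogeneous reduced problem, which has only the trivial solution.

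The main (already completed) obstacle was the pair of Lemma~\ref{lemellip} and the inf--sup lemma preceding this statement, whose proofs had to absorb the gap between $\Kz$ and $\Kzh$ quantified in Lemmas~\ref{lemdiscr2} and~\ref{lemcomp}. With those in hand, the present lemma is a direct Brezzi-type application once the shift by $Q_{\Kzh}\bz$ has homogenized the side condition.
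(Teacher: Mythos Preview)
Your proof is correct and follows essentially the same route as the paper: both arguments shift by $Q_{\Kzh}\bz$ to pass between (\ref{problem3discr}) and an auxiliary saddle point problem on $\bV_{h,\ast}\times V_h$, invoke Lemma~\ref{lemellip} and (\ref{discrinfsup}) for Brezzi well-posedness of the reduced problem, and handle uniqueness via the homogeneous difference problem in $\bV_{h,\ast}\times V_h$. The only cosmetic difference is that you introduce the shift a priori and derive the reduced system, whereas the paper first solves the reduced system and then defines $\bu_h$ by the shift.
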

\begin{proof}
First consider the saddle point problem: determine $(\hat \bu_h,\lambda_h) \in \bV_{h,\ast} \times V_h$ such that
\begin{equation} \label{problem2discr}
\begin{split}
  a(\hat \bu_h,\bv_h)+ b(\lambda_h, \bv_h) &= 0 \quad \text{for all}~~\bv_h \in \bV_{h,\ast}\\
   b(\mu_h, \hat \bu_h) &= b(\mu_h,\bz) \quad \text{for all}~~\mu_h \in  V_h.
\end{split}
\end{equation}
Using the discrete ellipticity and inf-sup properties, an application of standard saddle point theory \cite{GR,Ern04} yields that this problem has a unique solution $(\hat \bu_h,\lambda_h)$. Now define $\bu_h:=\hat \bu_h + Q_{\Kzh}\bz$. From $Q_{\Kzh}\hat \bu_h=0$ we obtain $Q_{\Kzh}\bu_h=Q_{\Kzh}\bz$.
Note that $a(Q_{\Kzh}\bz,\bv_h)=0$ for all $\bv_h \in \bV_h$ and $b(\mu_h,Q_{\Kzh}\bz)=0$ for all $\mu_h \in V_h$. Furthermore, in the first equation in \eqref{problem3discr} ``for all $\bv_h \in \bV_h$'' can be replaced by ``for all $\bv_h \in \bV_{h,\ast}$'',  since for all $\bv_h \in \Kzh$ we have $a(\bw_h,\bv_h)=b(\mu_h,\bv_h)=0$ for all $\bw_h \in \bV_h$, $\mu_h \in V_h$. Using this one easily checks that $(\bu_h, \lambda_h)$ is a solution of  \eqref{problem3discr}. \\
We investigate uniqueness. Let $(\bu_{h,1},\lambda_{h,1})$ and $(\bu_{h,2},\lambda_{h,2})$ be solutions of \eqref{problem3discr}.
For the differences $\delta \bu_h:=\bu_{h,1}-\bu_{h,2}$, $\delta \lambda_h=\lambda_{h,1}-\lambda_{h,2}$ we have that $(\delta \bu_h, \delta\lambda_h) \in \bV_{h,\ast} \times V_h$ satisfies
\[
 \begin{split}
  a(\delta \bu_h,\bv_h)+ b(\delta\lambda_h, \bv_h) &= 0 \quad \text{for all}~~\bv_h \in \bV_{h,\ast}\\
   b(\mu_h,\delta \bu_h) &= 0 \quad \text{for all}~~\mu_h \in  V_h.
\end{split}
\]
From the discrete ellipticity and  inf-sup properties it follows that $\delta \bu_h=0$ and $\delta \lambda_h=0$. 
\end{proof}

\begin{theorem}\label{Thmmain}
Let $h$ be sufficiently small such that $\Kzh=\Kz$ holds. 
Let $(\bu,\lambda)$ and $(\bu_h,\lambda_h)$ be the unique solutions of \eqref{problem3} and \eqref{problem3discr}, respectively. The following error bound holds: 
\begin{equation} \label{Mainest}
 \|\bu - \bu_h\|_1  \leq c_1 \min_{\bv_h \in \bV_{h}} \|\bu - \bv_h\|_1 + c_2\min_{\mu_h \in V_h}\|\lambda - \mu_h\|_1,
 \end{equation}
 with $c_1= (1 +\frac{1}{\gamma}\|a\|) (1+\frac{1}{\beta}\|b\|)$, $c_2= \frac{1}{\gamma} \|b\|$.
\end{theorem}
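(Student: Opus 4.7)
The plan is to follow the classical Brezzi-type error analysis for conforming saddle point discretizations, modified to handle the nontrivial kernel $\Kz=\Kzh$. First I would record the Galerkin orthogonality: since $\bV_h\subset\bH^1(\Omega)$ and $V_h\subset H^1(\Omega)$, testing \eqref{problem3} with discrete functions and subtracting \eqref{problem3discr} yields
\begin{align*}
a(\bu-\bu_h,\bv_h)+b(\lambda-\lambda_h,\bv_h)&=0\quad\forall\,\bv_h\in\bV_h,\\
b(\mu_h,\bu-\bu_h)&=0\quad\forall\,\mu_h\in V_h.
\end{align*}

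The core step is the construction of a discretely admissible comparison element. Given arbitrary $\bv_h\in\bV_h$, I would set $\bv_h^0:=\bv_h-Q_{\Kzh}\bv_h+Q_{\Kzh}\bz$, so that $Q_{\Kzh}\bv_h^0=Q_{\Kzh}\bz=Q_{\Kzh}\bu_h$. Using the hypothesis $\Kzh=\Kz$ together with $Q_{\Kz}\bu=Q_{\Kz}\bz$, a short calculation gives $\bu-\bv_h^0=(I-Q_{\Kz})(\bu-\bv_h)$, hence $\|\bu-\bv_h^0\|_1\leq\|\bu-\bv_h\|_1$ since $I-Q_{\Kz}$ is an orthogonal projection. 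By the discrete inf-sup estimate \eqref{discrinfsup} there exists $\bs_h\in\bV_{h,\ast}$ with $b(\mu_h,\bs_h)=b(\mu_h,\bu-\bv_h^0)$ for all $\mu_h\in V_h$ and $\|\bs_h\|_1\leq(\|b\|/\beta)\|\bu-\bv_h^0\|_1$. Setting $\bw_h:=\bv_h^0+\bs_h\in\bV_h$, one checks $Q_{\Kzh}\bw_h=Q_{\Kzh}\bu_h$ and $b(\nu_h,\bw_h-\bu_h)=0$ for all $\nu_h\in V_h$, i.e.\ $\bw_h-\bu_h\in\bV_{h,\ast}\cap\ker_h(b)$, and the triangle inequality gives $\|\bu-\bw_h\|_1\leq(1+\|b\|/\beta)\|\bu-\bv_h\|_1$.

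With $\bw_h$ in hand I would invoke the discrete ellipticity of Lemma~\ref{lemellip} applied to $\bw_h-\bu_h$. For any $\mu_h\in V_h$, Galerkin orthogonality together with $b(\lambda_h,\bw_h-\bu_h)=b(\mu_h,\bw_h-\bu_h)=0$ yields
\[
a(\bw_h-\bu_h,\bw_h-\bu_h)=a(\bw_h-\bu,\bw_h-\bu_h)-b(\lambda-\mu_h,\bw_h-\bu_h),
\]
so continuity of $a$ and $b$ and division by $\|\bw_h-\bu_h\|_1$ give $\|\bw_h-\bu_h\|_1\leq(\|a\|/\gamma)\|\bu-\bw_h\|_1+(\|b\|/\gamma)\|\lambda-\mu_h\|_1$. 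Combining this with the triangle inequality $\|\bu-\bu_h\|_1\leq\|\bu-\bw_h\|_1+\|\bw_h-\bu_h\|_1$ and the bound on $\|\bu-\bw_h\|_1$ from the previous step, and then taking infima over $\bv_h\in\bV_h$ and $\mu_h\in V_h$, delivers \eqref{Mainest} with the claimed $c_1$ and $c_2$.

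The main obstacle is the construction of $\bw_h$ so that $\bw_h-\bu_h$ lies simultaneously in $\bV_{h,\ast}$ (needed for ellipticity) and in $\ker_h(b)$ (needed to replace $\lambda-\lambda_h$ by an arbitrary $\lambda-\mu_h$); this is what forces the two-stage correction above. The assumption $\Kzh=\Kz$ enters precisely at the step $\bu-\bv_h^0=(I-Q_{\Kz})(\bu-\bv_h)$: without it, one would pick up an additional perturbation of order $h^k$ coming from Lemma~\ref{lemdiscr2} and the estimate would degrade accordingly.
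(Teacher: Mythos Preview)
Your proof is correct and is essentially the same as the paper's: both construct a comparison element $\bw_h\in\bV_h$ with $Q_{\Kzh}\bw_h=Q_{\Kzh}\bz$ and $\bw_h-\bu_h\in\ker_h(b)$ via the discrete inf-sup lift combined with a $\Kzh$-projection, and then apply discrete ellipticity plus Galerkin orthogonality. The only cosmetic difference is the order of operations---you project onto $\Kzh^\perp$ first and then lift, whereas the paper lifts first and then projects---but since $\Kzh\subset\ker_h(b)$ and the lift lies in $\bV_{h,\ast}$, the resulting $\bw_h$ is literally the same element and the constants $c_1,c_2$ come out identically.
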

\begin{proof}
 Define $\be_h:=\bu-\bu_h$. Note that $Q_{\Kzh}\be_h=0$ and 
\begin{align}  \label{loc3}
  a(\be_h,\bv_h)+ b(\lambda - \lambda_h,\bv_h) & =0 \quad \text{for all}~ \bv_h \in \bV_h,\\
  b(\mu_h,\be_h) &=0 \quad \text{for all}~ \mu_h\in V_h. \label{loc31}
\end{align}
We take an arbitrary $\bv_h \in \bV_h$. From \eqref{discrinfsup} it follows, cf. \cite[Chapter II]{GR}, that there exists $\br_h \in \bV_{h,\ast}$ such that
\begin{equation} \label{GRresult}
 b(\mu_h,\br_h)=b(\mu_h,\bu-\bv_h) \quad \text{for all}~\mu_h \in V_h,~\text{and}~\|\br_h\|_1 \leq \frac{1}{\beta}\|b\|\|\bu-\bv_h\|_1.
\end{equation}
Define $\bw_h:=\br_h +\bv_h -Q_{\Kzh}(\br_h +\bv_h-\bz)$. Note that $Q_{\Kzh} \bw_h=Q_{\Kzh}\bz$ holds. Using $b(\mu_h,Q_{\Kzh}\bv)=0$ for all $\bv \in \bH^1(\Omega)$ and all $\mu_h \in V_h$ and with \eqref{loc31}-\eqref{GRresult} we obtain
\[
 b(\mu_h,\bw_h)=b(\mu_h,\br_h+\bv_h)=b(\mu_h,\bu)=b(\mu_h, \bu_h) \quad \text{for all}~\mu_h \in V_h.
\]
Hence, $\bw_h-\bu_h \in \ker_h(b)$ holds. Furthermore, $Q_{\Kzh}(\bw_h-\bu_h)=0$, and thus $\bw_h-\bu_h \in \bV_{h,\ast}$ holds. Thus we get $\bw_h-\bu_h \in \ker_h(b) \cap \bV_{h,\ast}$. Using this, the ellipticity estimate \eqref{ellip} and \eqref{loc3} we obtain, for arbitrary $\tau_h \in V_h$: 
\begin{align*}
 \|\bw_h-\bu_h\|_1^2 & \leq \frac{1}{\gamma} \big(a(\bw_h-\bu,\bw_h-\bu_h)+a(\bu-\bu_h,\bw_h-\bu_h)\big) \\
  & = \frac{1}{\gamma} \big(a(\bw_h-\bu,\bw_h-\bu_h) + b(\lambda_h - \lambda, \bw_h-\bu_h)\big) \\
  & = \frac{1}{\gamma} \big(a(\bw_h-\bu,\bw_h-\bu_h) + b(\tau_h - \lambda, \bw_h-\bu_h)\big) \\
  & \leq \frac{1}{\gamma} \big( \|a\| \|\bw_h-\bu\|_1 \|\bw_h-\bu_h\|_1 + \|b\| \|\tau_h - \lambda\|_1\| \bw_h-\bu_h\|_1\big).
\end{align*}
Using $\|\bu-\bu_h\|_1 \leq \|\bw_h-\bu\|_1+\|\bw_h - \bu_h\|_1$ we obtain
\begin{equation} \label{ll8}
 \|\bu-\bu_h\|_1 \leq (1 +\frac{1}{\gamma}\|a\|) \|\bw_h-\bu\|_1  +\frac{1}{\gamma} \|b\| \|\tau_h - \lambda\|_1.
\end{equation}
From $Q_{\Kzh}\bu=Q_{\Kz}\bu=Q_{\Kz}\bz=Q_{\Kzh}\bz=Q_{\Kzh}\bw_h$ we get $Q_{\Kzh}(\bw_h-\bu)=0$ and thus, using \eqref{GRresult}, we obtain
\begin{align*}
 \|\bw_h-\bu\|_1 & =\|(I-Q_{\Kzh})(\bw_h-\bu)\|_1=\|(I-Q_{\Kzh})(\br_h+\bv_h - \bu)\|_1  \\ 
  & \leq \|\bv_h-\bu\|_1+\|\br_h\|_1 \leq (1+\frac{1}{\beta}\|b\|)\|\bv_h - \bu\|_1. 
\end{align*}
Combining this with the result \eqref{ll8} yields
\[
 \|\bu-\bu_h\|_1 \leq (1 +\frac{1}{\gamma}\|a\|) (1+\frac{1}{\beta}\|b\|)\|\bv_h-\bu\|_1  +\frac{1}{\gamma} \|b\| \|\tau_h - \lambda\|_1.
\]
Since $\bv_h \in \bV_h$, $\tau_h \in V_h$ are arbitray, this completes the proof. 
\end{proof}
\ \\
We thus derived an optimal order bound for the discretization error $\|\bu-\bu_h\|_1$. 
A similar optimal bound as in \eqref{Mainest}, with different constants $c_1,c_1$, can be derived for the error $\|\lambda-\lambda_h\|_1$. Since in our application, this discretization error is less relevant we do not present this bound. \\
An $L^2$-norm estimate $\|\bu- \bu_h\|_{L^2} \leq c h\big(\|\bu-\bu_h\|_1+\|\lambda-\lambda_h\|_1\big)$ can be derived using the Aubin-Nitsche Lemma, cf. \cite[Secions 2.3.4, 2.4.2]{Ern04}. For this one needs $H^2$-regularity of the saddle point problem \eqref{problem3}. We are not aware of any regularity results for this type of saddle point problem.  

\begin{remark} \label{remrigid2} \rm
Assume we have an inverse level set problem as in Section~\ref{sectLS} with a  level set transport resulting from a rigid motion $\bu^\ast$ (cf. Remark~\ref{ex1}) and $\Kz=\{\mathbf{0}\}$. Then the exact solution of the saddle point problem  \eqref{problem3} is given by $(\bu,\lambda)=(\bu^\ast,0)$, cf. Remark~\ref{remrigid}. Furthermore, the rigid motion is a element of the finite element space $\bu^\ast \in \bV_h$. Hence, also after discretization we exactly recover the rigid motion. 
 \end{remark}

\section{Numerical experiments} \label{Experiments}

\newcommand{\errorpath}{./data}
We present results of numerical experiments.
First we  consider  the synthetic example presented in Lemma \ref{lemspecial}. Further experiments   show results for two inverse level set problems, namely an ellipse deforming to a biconcave shape and a rigid motion of an ellipse. Finally, motivated by the application treated in \cite{Tao2016}, we show an example in which the solution of an inverse level set problem is used to determine particle trajectories.  


We implemented the discretization \eqref{problem3discr} in Netgen/NGSolve\footnote{\url{https://ngsolve.org/}}\cite{ngsolve2024}. In our experiments, we start with an unstructured simplicial mesh  and refine the mesh globally using a marked-edge bisection method. We use continuous piecewise linear, quadratic or cubic  ($1 \leq k \leq 3$) finite elements. 



\FloatBarrier
\subsection{Artificial example of Lemma~\ref{lemspecial}}
We consider the problem stated in \\Lemma \ref{lemspecial}. We take
  $\Omega = (-\frac43,\frac43)^2$, $\hat{\bz} = \frac{1}{\sqrt{5}}\begin{pmatrix}
    1\\
    2
  \end{pmatrix}$
  , 
  $\hat{\bz}_\perp =\frac{1}{\sqrt{5}}\begin{pmatrix}
    -2\\
    1
  \end{pmatrix}$ and 
  \begin{align*}
      F(x)=\cos(x) - \frac{1}{\vert \Omega \vert}\int_\Omega \cos\left(\hat \bz \cdot \bx \right) d\bx.
  \end{align*}
We start with an initial mesh with (maximal) mesh size  $h=1$ and the mesh size is halved in every refinement.

As discussed in Lemma \ref{lemspecial}, there holds $\cK_{\hat \bz}={\rm span}(\hat{\bz}_\perp)$ and  thus  the matrix  corresponding to \eqref{problem3discr} is singular. We determine a solution $\tilde{\bu}_h$ of this discrete system and compute the  projection
$
  \bu_h = \tilde{\bu}_h-Q_{\cK_{\hat{\bz}}}\tilde{\bu}_h
$.

We use the analytical solution for the velocity given in Lemma \ref{lemspecial} as reference solution $\bu$ and show the error convergence rates in Fig. \ref{Synthetic}. Theory predicts convergence of order $\mathcal{O}(h^k)$ for $\Vert \bu-\bu_h\Vert_{1}$.  These optimal convergence rates are confirmed in the experiment. As an analytical solution for the Lagrange multiplier is not known, we do not show convergence rates for it.
\begin{figure}[htp]
  \centering
    \begin{tikzpicture}
      \begin{axis}[
          width=0.4\linewidth, height=0.4\linewidth,
          xlabel={Mesh size $h$},
              ylabel={Error},
              domain={0.04:1.0},
              legend style={
                  at={(0.5, -0.3)}, 
                  anchor=north,
                  draw=none,
                  fill=none,
                  legend columns=2 
              },
              ymode=log,
              xmode=log,
      ]
          \addplot[blue, mark=o] table[x=hmax, y=p2p2h1v] {\errorpath/Error4.dat};
          \addplot[black, mark=o] table[x=hmax, y=p1p1h1v] {\errorpath/Error4.dat};

          \addplot[dashed,line width=0.75pt] {1*x^1};
          \addplot[dotted,line width=0.95pt] {0.1*x^2};
          \legend{%
              \small $k=2$,
              \small $k=1$,
              \small $\cO(h^1)$,
              \small $\cO(h^2)$,
          }
      \end{axis}
  \end{tikzpicture}
    \caption{Errors $\Vert \bu-\bu_h\Vert_{1}$ for the synthetic example}\label{Synthetic}
\end{figure}
\FloatBarrier

\subsection{Deforming ellipse}
In the following examples, we use the level set function
\begin{align*}
	\phi(x,y, t) &=t ((d^2 + x^2 + y^2 )^3 - 8d^2x^2-c^4)+(1-t)(1.3x^2+y^2-1)\overset{!}{=}0. \\
\end{align*}
with $	c = \frac{24}{25}$ and $d = \frac{19}{20}$, cf. Fig. \ref{biconcaveFig}.
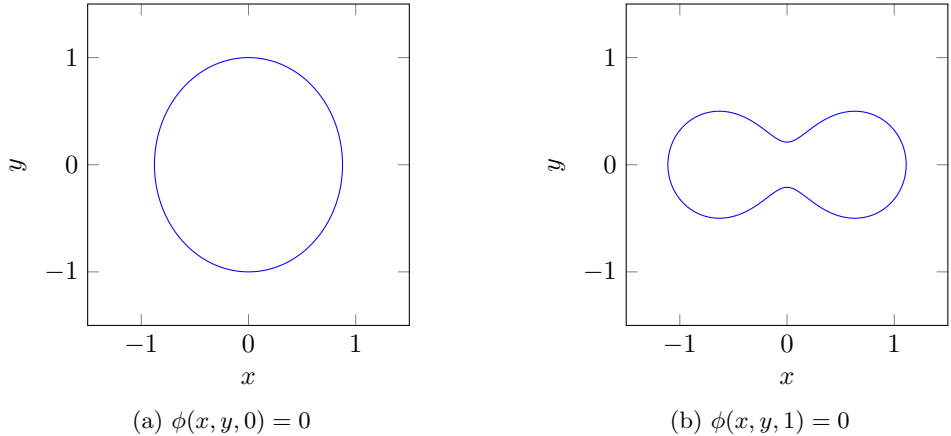
\begin{figure}[htp]
  \centering
  \begin{subfigure}[b]{0.45\textwidth}
      \begin{tikzpicture}
          \begin{axis}[
              axis equal image,
              width=\linewidth, height=\linewidth,
              xlabel={$x$},
              ylabel={$y$},
              xmin=-1.5, xmax=1.5,
              ymin=-1.5, ymax=1.5,
          ]
              \addplot[blue] table {\errorpath/phi_level_set.dat};
          \end{axis}
      \end{tikzpicture}
      \caption{$\phi(x,y, 0)=0$}
  \end{subfigure}
  \hfill
  \begin{subfigure}[b]{0.45\textwidth}
      \begin{tikzpicture}
          \begin{axis}[
              axis equal image,
              width=\linewidth, height=\linewidth,
              xlabel={$x$},
              ylabel={$y$},
              xmin=-1.5, xmax=1.5,
              ymin=-1.5, ymax=1.5,
          ]
              \addplot[blue] table {\errorpath/func_level_set.dat};
          \end{axis}
      \end{tikzpicture}
      \caption{$\phi(x,y, 1)=0$}
  \end{subfigure}
  \caption{Plots of the level sets}
  \label{biconcaveFig}
\end{figure}

For $t=0$ this corresponds to an ellipse and for $t=1$ to a biconcave shape. The prescribed velocity field is given by $\bz= - \frac{\partial \phi}{\partial t}\frac{1}{\|\nabla \phi\|} \hat \bz$, with $\hat{\bz}= \bn=\frac{\nabla \phi}{\Vert \nabla \phi\Vert}$. The only type of shape with rotational symmetry in two dimensions is a circle, so there holds $\cK_{\hat \bz}=\{\mathbf{0}\}$ for all $t \in [0,1]$.

 For small $t \geq 0$ there is only one critical point of $\nabla \phi$ close to the origin, while for larger $t$ we have multiple points with $\nabla \phi = 0$. We  solve \eqref{problem3discr} for $t=0$, but similar results are obtained if we take small $t > 0$. 
 
 For $\|\nabla \phi\| \to 0$ we do not have a ``stable'' normal velocity field. To avoid this instability we introduced the assumption \eqref{assphi}: $\min_{(\bx, t) \in \Omega_T}\|\nabla \phi(x,t)\| \geq c_{\min} >0$. This assumption is essential in the analysis of our method. In applications one can choose a computational domain that does not contain critical points of the level set function. However, when choosing such a  domain, one has to be aware of the fact that the smoothness of the solution depends on the smoothness of the domain boundary, cf. Remark~\ref{remLa}. We illustrate this effect in this example. In the subsections below we  consider three choices for the computational  domain: 1. no critical points of $\phi$ and the boundary is ``regular'' (interior boundary smooth, exterior boundary convex); 2. no critical points of $\phi$ and the interior boundary has reentrant corners; 3. the domain contains a critical point of $\phi$.
 
In the first two cases a reference solution $(\bu, \lambda)$ on a sufficiently fine mesh ($h=0.007$) with $k_{\text{ref}}=3$ is determined and we perform a convergence study starting with a coarse mesh ($h=1$) and refine it 5 times. 

\FloatBarrier
\subsubsection{Regular domain without critical points}\label{sectnocorner}
We define our domain as $\Omega = (-\frac43,\frac43)^2\setminus \{\bx \in \R^2 \vert \ \Vert \bx \Vert_2 \leq 0.2\}$. Removing the inner disc removes critical points of $ \phi$. Results are presented in   Fig. \ref{circleErrors}. The discrete velocity solution is shown in Fig.~\ref{GoodPlot}. 
\begin{figure}[ht!]
  \centering
  \begin{tikzpicture}
      \begin{groupplot}[
          group style={
              group size=2 by 1,
              horizontal sep=80pt
          },
          width=0.4\linewidth,
          height=0.4\linewidth,
          xlabel={Mesh size $h$},
          ylabel={Error},
          ymode=log,
          xmode=log,
          legend style={
              at={(-0.5,-0.9)}, 
              anchor=south,
              draw=none,
              fill=none,
              legend columns=2, 
              /tikz/every even column/.append style={column sep=5mm}
          },
          domain={0.02:1.0},
          cycle list name=mark list
      ]
  
      \nextgroupplot[title={$\Vert \bu-\bu_h\Vert_{1}$}]
      \addplot[blue, mark=o] table[x=hmax, y=p2p2h1v] {\errorpath/Error2.dat};
      \addplot[black, mark=x] table[x=hmax, y=p1p1h1v] {\errorpath/Error2.dat};
      \addplot[dashed,line width=0.75pt] {15*x^1};
      \addplot[dotted,line width=0.95pt] {4*x^2};
  
      \nextgroupplot[title={$\Vert \lambda-\lambda_h\Vert_{1}$}]
      \addplot[blue, mark=o] table[x=hmax, y=p2p2h1l] {\errorpath/Error2.dat};
      \addplot[black, mark=x] table[x=hmax, y=p1p1h1l] {\errorpath/Error2.dat};
      \addplot[dashed,line width=0.75pt] {15*x^1};
      \addplot[dotted,line width=0.95pt] {4*x^2};
\legend{
          $k=2$,
          $k=1$,
          $h^1$,
          $h^{2}$
      }
      \end{groupplot}
  \end{tikzpicture}
  \caption{$H^1$ errors for a regular domain without critical points of $\phi$}\label{circleErrors}
  \end{figure}
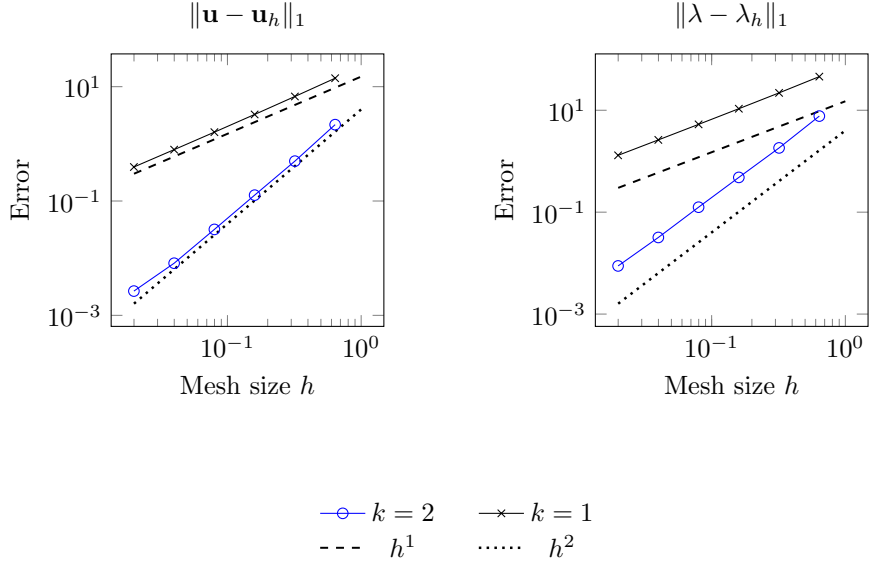
  
The expected order of convergence $\mathcal{O}(h^k)$ for $\Vert \bu-\bu_h\Vert_{H^1}$ and $\Vert \lambda-\lambda_h\Vert_{H^1}$ is clearly observed in the numerical results. 
  \FloatBarrier
\subsubsection{Domain with reentrant corners without critical points} \label{sectcorner}
We choose  the domain $\Omega= (-\frac43,\frac43)^2 \setminus [-0.4,0.4]^2$. We then have reentrant corners inside the domain, which may lead to poor regularity of $\lambda$ (and $u$), cf. Remark~\ref{remLa}. Results are presented in  Fig.~\ref{RegularityPlot}.

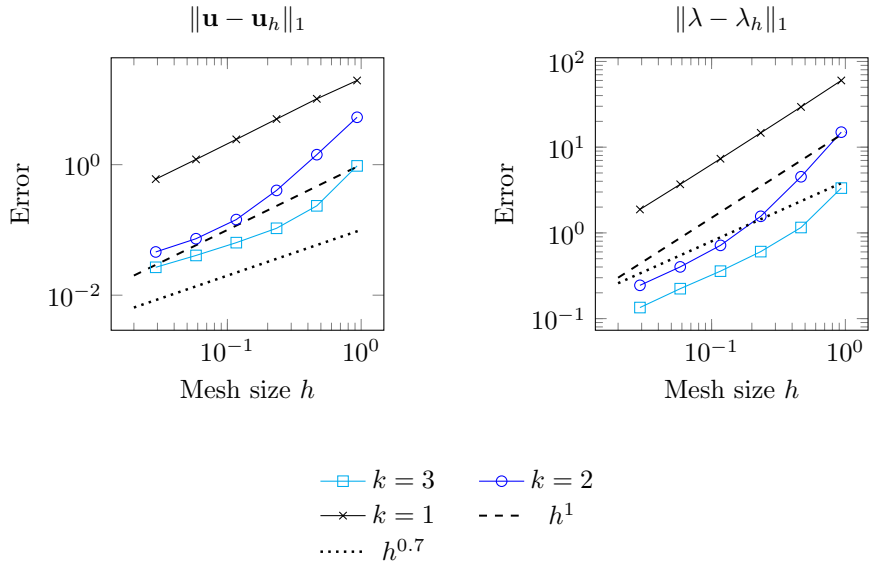
\begin{figure}[ht!]
  \centering
  \begin{tikzpicture}
      \begin{groupplot}[
          group style={
              group size=2 by 1,
              horizontal sep=80pt
          },
          width=0.4\linewidth,
          height=0.4\linewidth,
          xlabel={Mesh size $h$},
          ylabel={Error},
          ymode=log,
          xmode=log,
          minor x tick num=9, 
          minor y tick num=9,
          legend style={
              at={(-0.5,-0.9)}, 
              anchor=south,
              draw=none,
              fill=none,
              legend columns=2, 
              /tikz/every even column/.append style={column sep=5mm}
          },
          domain={0.02:1.0},
          cycle list name=mark list
      ]
  
      \nextgroupplot[title={$\Vert \bu-\bu_h\Vert_{1}$}]
      \addplot[cyan, mark=square] table[x=hmax, y=p3p3h1v] {\errorpath/Error6.dat};
      \addplot[blue, mark=o] table[x=hmax, y=p2p2h1v] {\errorpath/Error6.dat};
      \addplot[black, mark=x] table[x=hmax, y=p1p1h1v] {\errorpath/Error6.dat};
      \addplot[dashed,line width=0.75pt] {1*x^1};
      \addplot[dotted,line width=0.95pt] {0.1*x^0.7};
  
      \nextgroupplot[title={$\Vert \lambda-\lambda_h\Vert_{1}$}]
      \addplot[cyan, mark=square] table[x=hmax, y=p3p3h1l] {\errorpath/Error6.dat};
      \addplot[blue, mark=o] table[x=hmax, y=p2p2h1l] {\errorpath/Error6.dat};
      \addplot[black, mark=x] table[x=hmax, y=p1p1h1l] {\errorpath/Error6.dat};
      \addplot[dashed,line width=0.75pt] {15*x^1};
      \addplot[dotted,line width=0.95pt] {4*x^0.7};
  
      \legend{
          $k=3$,
          $k=2$,
          $k=1$,
          $h^1$,
          $h^{0.7}$
      }
      \end{groupplot}
  \end{tikzpicture}
  \caption{$H^1$ errors for a domain with reentrant corners without critical points of $ \phi$}\label{RegularityPlot}
  \end{figure}
  
  The results indicate that optimal convergence of order $\mathcal{O}(h^k)$ for $\Vert \bu -\bu_h\Vert_1$ and $\Vert \lambda-\lambda_h \Vert_{H^1}$ is \emph{not} reached. This is probably due to a loss of regularity. This claim is supported by the plot of the pointwise errors shown  in Fig.~\ref{cornerErrorPlot}.
\begin{figure}[ht!]
  \centering

  \begin{subfigure}[b]{0.45\textwidth}
    \centering
    \includegraphics[width=\textwidth]{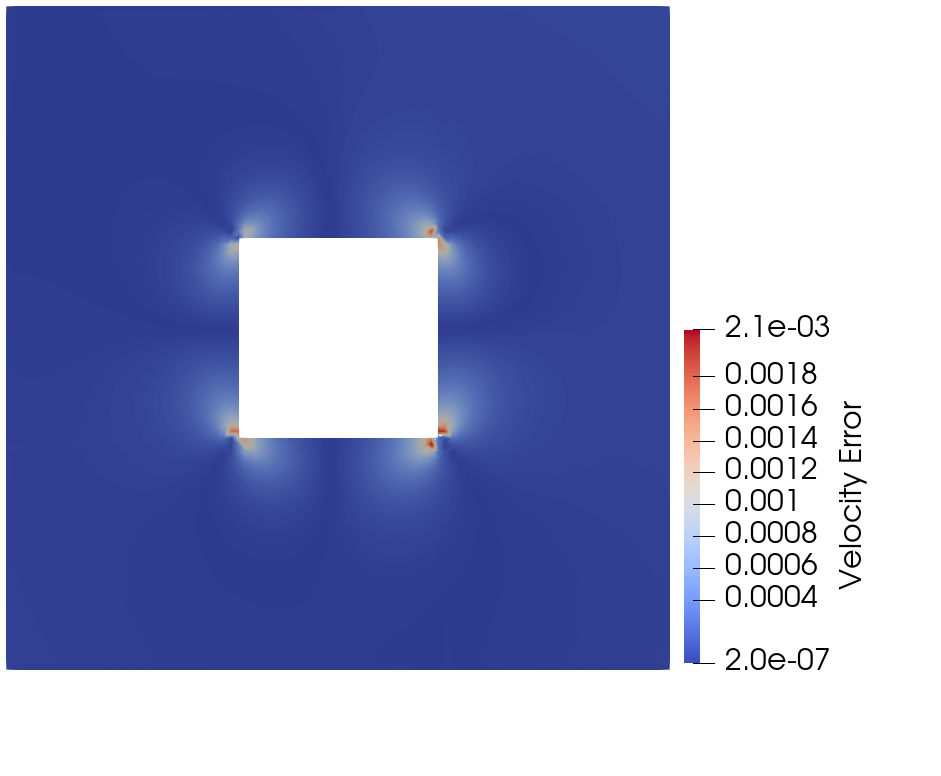}
    \caption{$\Vert \bu(\bx)-\bu_h(\bx)\Vert$}\label{VelocityErrorPlot}
\end{subfigure}
  \hfill
  \begin{subfigure}[b]{0.45\textwidth}
    \centering
      \includegraphics[width=\textwidth]{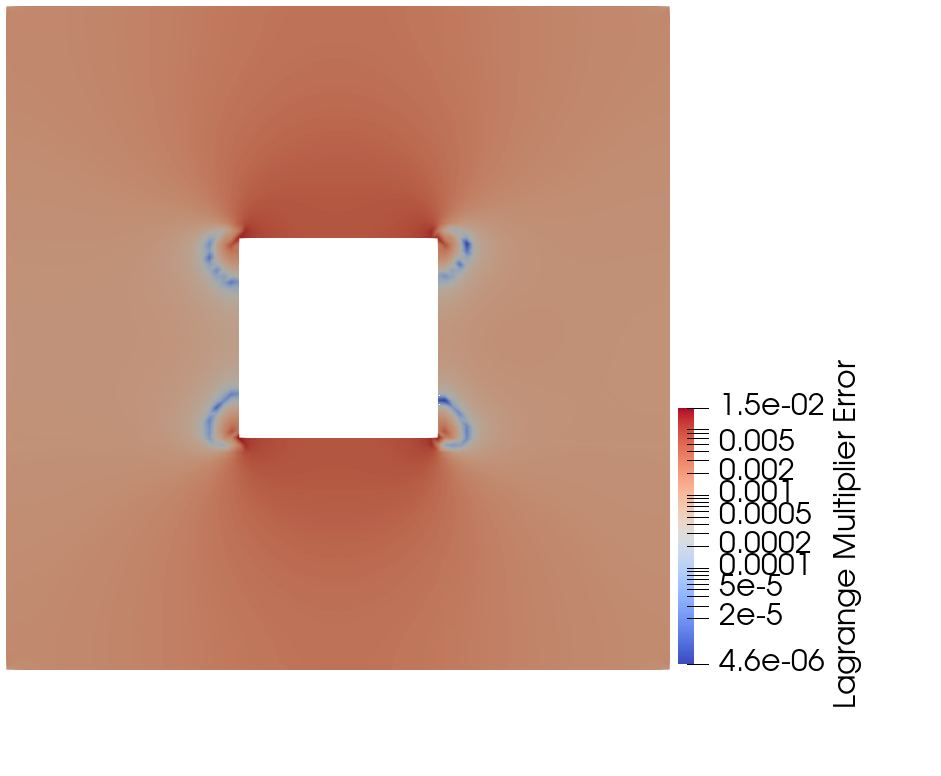}
      \caption{$\vert \lambda(\bx)-\lambda_h(\bx)\vert$}\label{LagrangeErrorPlot}
  \end{subfigure}
\caption{Pointwise errors in the velocity and Lagrange multiplier}
\label{cornerErrorPlot}
\end{figure}

\FloatBarrier
\subsubsection{Domain with a critical point of $\phi$}
We take $\Omega =  (-\frac43,\frac43)^2$, a domain that contains a critical point: $\nabla \phi(0,0,0) = 0$. 
In Fig.~\ref{GoodPlot} we show the discrete velocity solution for the case of a regular domain without critical points, cf. subsection~\ref{sectnocorner} above.
The discrete  solution for the domain that contains a critical point is shown in  Fig.~\ref{BadPlot}. One clearly observes a strange velocity field with (nearly) singular points at the center and the south pole of the ellipse. This result shows that critical points of the level set function cause difficulties and  that assumption \eqref{assphi} is essential. 
\begin{figure}[ht!]
  \centering

  \begin{subfigure}[t]{0.49\textwidth}
    \centering
    \includegraphics[width=\textwidth]{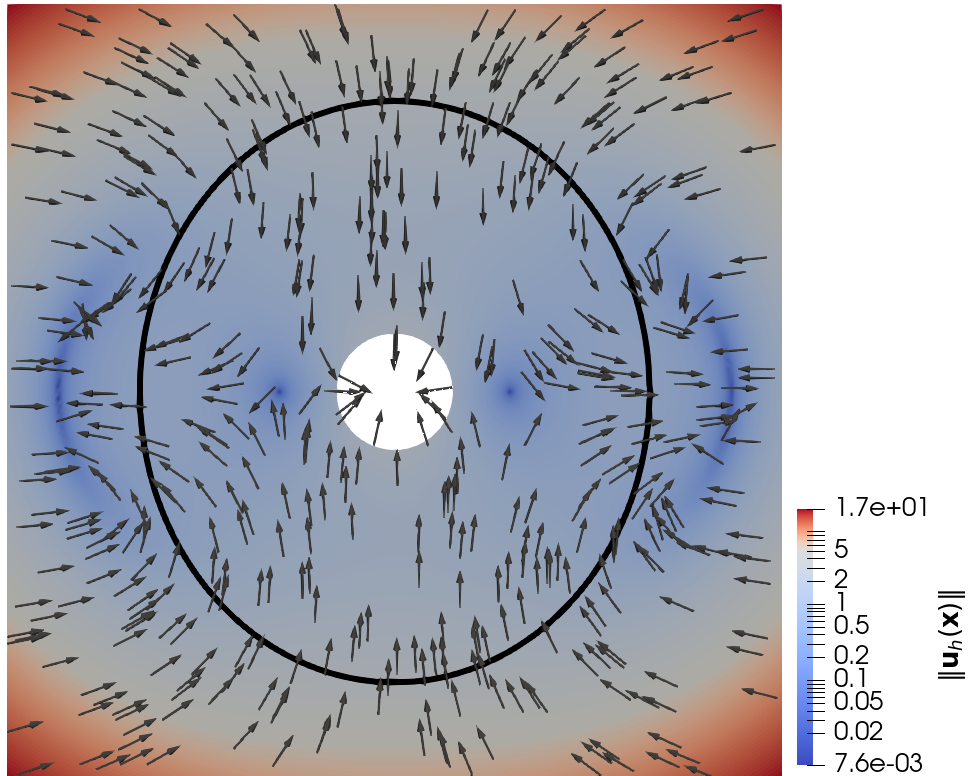}
    \caption{Regular domain without a critical point of $\phi$}\label{GoodPlot}
\end{subfigure}
  \hfill
  \begin{subfigure}[t]{0.49\textwidth}
    \centering
      \includegraphics[width=\textwidth]{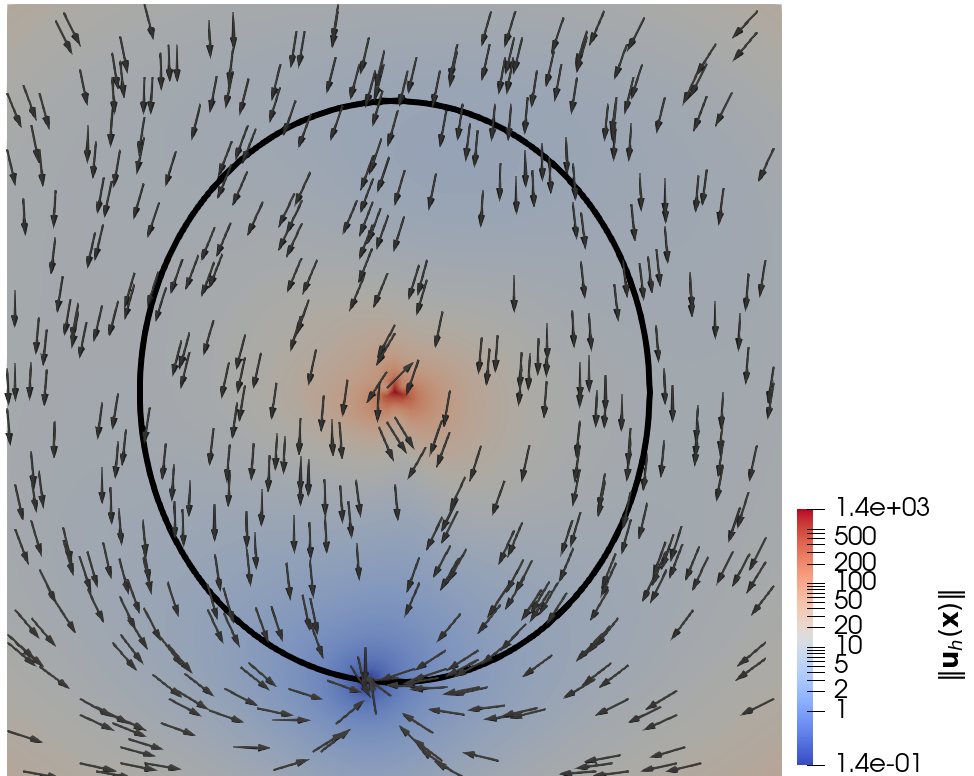}
      \caption{Domain with critical point of $\phi$}\label{BadPlot}
  \end{subfigure}
\caption{$\bu_h(\bx)$ on a domain without (left)  and with (right) a critical point of $\phi$}
\label{criticalComparison}
\end{figure}
  \FloatBarrier
\subsection{A rigid motion example}
We consider the example of an ellipse undergoing a rigid motion (rotation), cf. Remark~\ref{remrigid2}. The  domain is  $\Omega = (-\frac43,\frac43)^2\setminus \{\bx \in \R^2 \vert \ \Vert \bx \Vert_2 \leq 0.2\}$ and we use

\begin{align*}
  \phi(x,y,t) &= 1.3 \hat{x}(x,y,t)^2+\hat{y}(x,y,t)^2-1
\end{align*}
with
\begin{align*}
  \begin{pmatrix}\hat{x}(x,y,t)\\
  \hat{y}(x,y,t)
  \end{pmatrix} &= M(t) \begin{pmatrix}
    x\\
    y
  \end{pmatrix}, \quad
  M(t) = \begin{pmatrix}
		\cos(0.1t) & -\sin(0.1t) \\
		\sin(0.1t) & \cos(0.1t)
	\end{pmatrix}.
\end{align*}
The  domain does not contain  a critical point of $\phi$.  Furthermore, $\cK_{\hat{\bz}} = \{\mathbf{0}\}$ for all $t \in [0,1]$ as the level set function describes a non-circular ellipse. This level set transport can be obtained from a rigid motion (rotation) and the solution of the strain energy minimization problem~\ref{problem1} is given by, cf. \cite{Tao2016},
\begin{align*}
  \bu(\bx) &= ((\frac{\partial}{\partial t} M^T) M)\big|_{t=0} \begin{pmatrix}
    x\\
    y
  \end{pmatrix}.
\end{align*}
In the saddle point formulation the Lagrange multiplier solution is $\lambda=0$. Note that 
$\bu$ is linear and  lies in our finite element space. Hence, in the discretization we should recover  the exact solution  $\bu$ up to machine accuracy, cf.~Remark \ref{remrigid2}. This statement, however, assumes exact integration during assembly of the system matrix. As $\hat{\bz}$ is not in our finite element space, the discrete problem is perturbed due to numerical quadrature. NGSolve uses by default quadrature rules of order $2k$, but this can be increased with a parameter $l$ to $2k+l$. Results for $k=1$ and varying $l$ are shown in  Fig.~\ref{rigidMotion}.
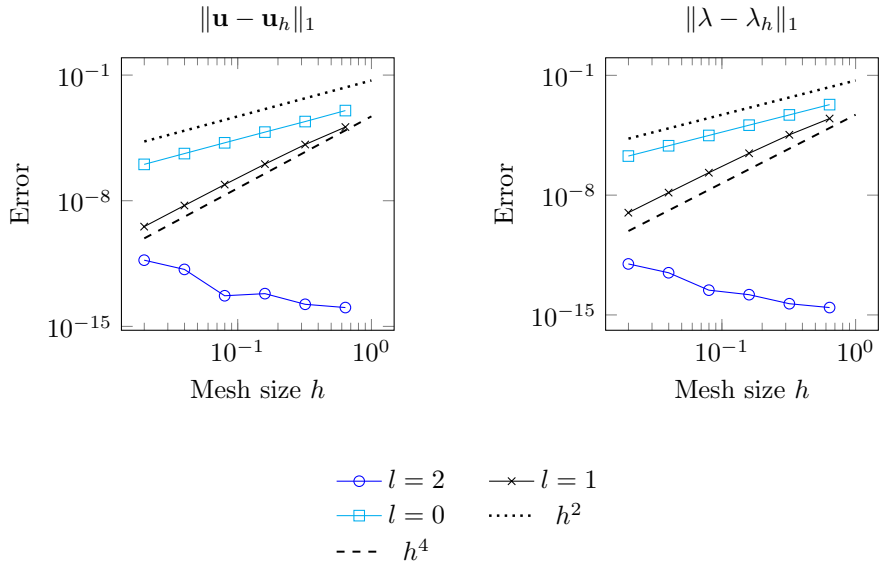
\begin{figure}[ht!]
  \centering
  \begin{tikzpicture}
      \begin{groupplot}[
          group style={
              group size=2 by 1,
              horizontal sep=80pt
          },
          width=0.4\linewidth,
          height=0.4\linewidth,
          xlabel={Mesh size $h$},
          ylabel={Error},
          ymode=log,
          xmode=log,
          legend style={
              at={(-0.5,-0.9)}, 
              anchor=south,
              draw=none,
              fill=none,
              legend columns=2, 
              /tikz/every even column/.append style={column sep=5mm}
          },
          domain={0.02:1.0},
          cycle list name=mark list
      ]
  
      \nextgroupplot[title={$\Vert \bu-\bu_h\Vert_{1}$}]
      \addplot[blue, mark=o] table[x=hmax, y=p2p2h1v] {\errorpath/Error1.dat};
      \addplot[black, mark=x] table[x=hmax, y=p1p1h1v] {\errorpath/Error1.dat};
      \addplot[cyan, mark=square] table[x=hmax, y=p0p0h1v] {\errorpath/Error1.dat};
      
      \addplot[dotted,line width=0.95pt] {0.05*x^2};
      \addplot[dashed,line width=0.75pt] {0.0005*x^4};
      \nextgroupplot[title={$\Vert \lambda-\lambda_h\Vert_{1}$}]
      \addplot[blue, mark=o] table[x=hmax, y=p2p2h1l] {\errorpath/Error1.dat};
      \addplot[black, mark=x] table[x=hmax, y=p1p1h1l] {\errorpath/Error1.dat};
      \addplot[cyan, mark=square] table[x=hmax, y=p0p0h1l] {\errorpath/Error1.dat};

      \addplot[dotted,line width=0.95pt] {0.05*x^2};
      \addplot[dashed,line width=0.75pt] {0.0005*x^4};
  
      \legend{
          $l=2$,
          $l=1$,
          $l=0$,
          $h^{2}$,
          $h^{4}$
      }
      \end{groupplot}
  \end{tikzpicture}
  \caption{$H^1$ errors for an ellipse undergoing rigid motion.}\label{rigidMotion}
  \end{figure}
  
  With increasing accuracy of the quadrature, the error both for the velocity field and the Lagrange multiplier decreases. For sufficiently accurate quadrature ($l=2$) machine  accuracy is reached, for not too small mesh size $h$. For very small mesh sizes $h$ there appears to be some stability issue, that may be related to the direct solver used for the linear saddle point system. 
\subsection{Computing trajectories}

We consider the problem of near-isometric level set tracking introduced in \cite{Tao2016}.
Let $\phi:\Omega_T \rightarrow \R$ be a level set function and $\bxi_0 \in \Gamma(0)$ a point on the zero level set of $\phi$ at $t=0$. Further, let $\bu_t$ be the solution to \eqref{problem1} for a fixed $t \in [0,t_e]$. The particle tracking problem reads as follows. Find $\bxi(t): [0,t_e] \rightarrow \Omega$, such that
\begin{equation} \label{ODE} \begin{split}
  \frac{d }{d t}\bxi &= \bu_{t}(\bxi), \\
  \bxi (0) &= \bxi_0.
\end{split}
\end{equation}
We can approximate the solution $\bu_t$ of \eqref{problem1}, for any given $t \in [0,t_e]$, by solving \eqref{problem3discr}. For simplicity, we use an explicit Euler time stepping scheme for time discretization in \eqref{ODE}. Given $N \in \mathbb{N}$, we define $\Delta t := \frac{t_e}{N}$, $t_i :=i \Delta t$ and $i \in \{0,\dots,N\}$. Then we construct, given a starting point $\bxi_{0} \in \Gamma(0)$, points $\{\bxi_{h,i}\vert \ i \in \{0,\dots,N\}\} \subset \R^d$ using 
\begin{align}\label{trackingdiscr}
    \bxi_{h,i} &= \bxi_{h,i-1}+\Delta t\ \bu_{h,t_{i-1}}(\bxi_{h,i-1})\quad \text{for}~ i=1,\ldots N,\\
 \nonumber \bxi_{h,0} &= \bxi_0,
\end{align}

with $\bu_{h,t_i}$ being the solution to \eqref{problem3discr} at time $t_i$.

For comparison, we construct $\{\tilde{\bxi}_{h,i}\vert \ 0 \leq i \leq N\} \subset \R^d$ as solutions of 
\begin{align}\label{baseline}
    \tilde{\bxi}_{h,i} &= \tilde{\bxi}_{h,i-1}+\Delta t\ \tilde{\bu}_{t_{i-1}}(\tilde{\bxi}_{h,i-1}) \quad \text{for}~ i\in\{1,\dots N\},\\
 \nonumber \tilde{\bxi}_{h,0} &= \bxi_0,
\end{align} 

with $\tilde{\bu}_{t_i}:=-\frac{\partial \phi}{\partial t}\frac{1}{\Vert \nabla \phi \Vert^2}\nabla \phi$, i.e. a purely normal velocity field without any contribution tangential to the level set.

\subsubsection{Particle transport on a surface in $\R^3$}
We consider a level set representing a capsule shape and apply a bending deformation to it.

Let $\bx = \begin{pmatrix}
  x\\y\\z
\end{pmatrix}$. Defining
\begin{align*}
  C(\bx) &= \left\| \bx - \begin{pmatrix}
    0\\\max(-0.7, \min(0.7, y))\\0
  \end{pmatrix}\right\| -0.3
\end{align*}
and
\begin{align*}
  B(\bx, t) &= \begin{pmatrix}
    \cos(ty) & -\sin(ty) & 0\\
    \sin(ty) & \cos(ty) & 0\\
    0 & 0 & 1.02
  \end{pmatrix}  \bx
\end{align*}

we obtain the level set function for a bending capsule via
\begin{align*}
  \phi(\bx, t) := C\big(B (\bx, t)\big).
\end{align*} 
Note that the scaling factor of $1.02$ in $B(\cdot, \cdot)$ ensures that there are no symmetries in the initial configuration.

Define $\varphi = \frac{2\pi}{60}$ and $\theta = \frac{\pi}{80}$. We sample $M=4800$ points $\bp_i$ on a sphere with radius $r=1.2$ using 
\begin{align*}
  \bp_{60k+j}=r\begin{pmatrix}
     \sin(k\theta)\cos(j\varphi)\\
    \cos(k\theta)\\
    \sin(k\theta)\sin(j\varphi)
  \end{pmatrix}, \quad 0\leq k <80,\ 0\leq j < 60,
\end{align*} 
and project these orthogonally onto the level set for $t=0$.  This results in  a point cloud $(\bxi_0^i)_{i=1}^M$ on $\Gamma(0)$. Rendering these points as spheres with radius $0.01$, with RGB coloring depending on the coordinate values in Blender\cite{Blender}, we get Figure \ref{particleinitial}.

Now, we solve \eqref{baseline} and \eqref{trackingdiscr} on the time interval $[0,1]$ with $N=60$. 
The normal velocity $\tilde{\bu}_{t_i}$, used in  \eqref{baseline}, can be determined analytically as $\phi$ is known. 
The resulting particle distribution at $t=1$ is shown in Fig.~\ref{particlenormal}.
While the particles do track the bending shape, the particle density on the front is significantly lower and particles accumulate on the back side of the bent shape. 

For solving \eqref{trackingdiscr}, we create a quasi-uniform tetrahedral mesh with mesh size $h \approx 0.0875$ on $\Omega := (-1.3,1.3)^3$. For efficiency reasons and to avoid critical points of $\nabla \phi$, we only solve the problem on a narrow band containing all tetrahedra where $-0.02 \leq \phi \leq 0.02$ using ngsxfem\cite{LHPvW21}.  The resulting particle distribution at $t=1$ is shown 
in Fig.~\ref{particlemethod}. The result shows that, as a result of the near-isometric velocity field construction,  the distances between particles are approximately constant during the shape evolution. 
\begin{figure}[h!]
  \centering
  \includegraphics[width=0.9\linewidth]{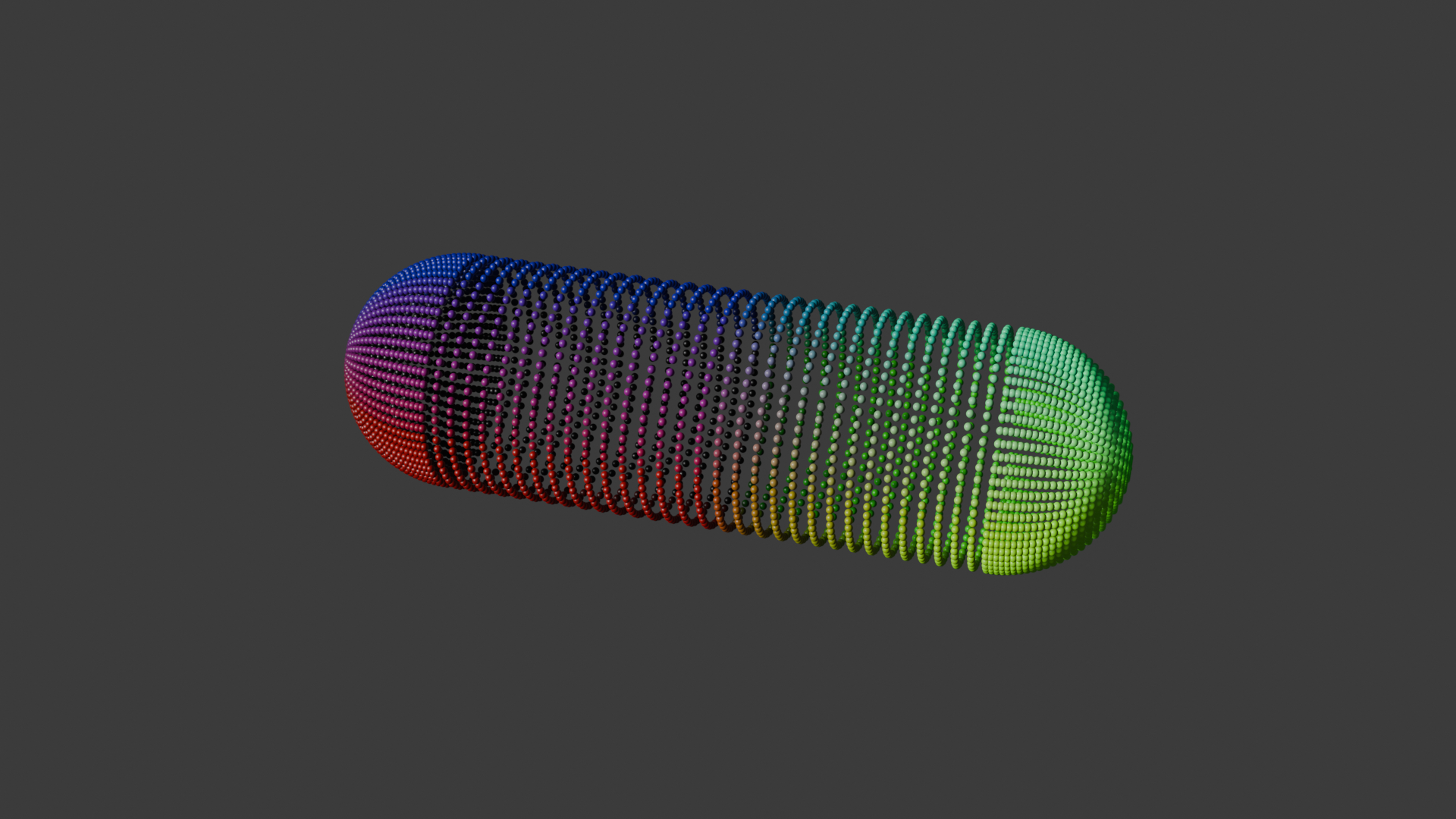}
  \caption{Initial particle configuration}\label{particleinitial}
\end{figure}

\begin{figure}[h!]
  \centering
  \begin{subfigure}[t]{0.45\linewidth}
    \includegraphics[width=\linewidth]{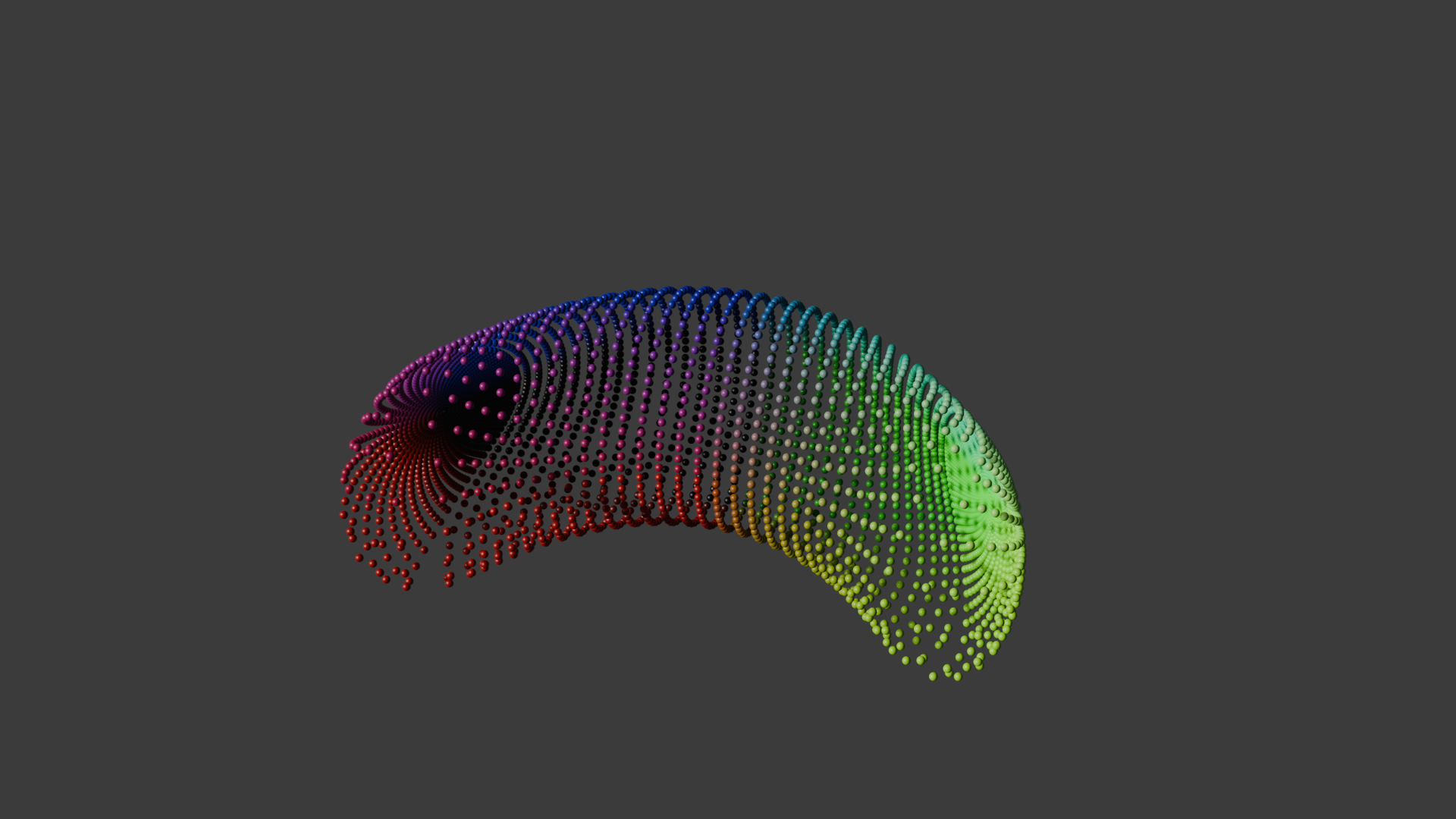}
    \caption{Particle tracking with normal velocity}\label{particlenormal}

  \end{subfigure}
    \begin{subfigure}[t]{0.45\linewidth}
      \centering

    \includegraphics[width=\linewidth]{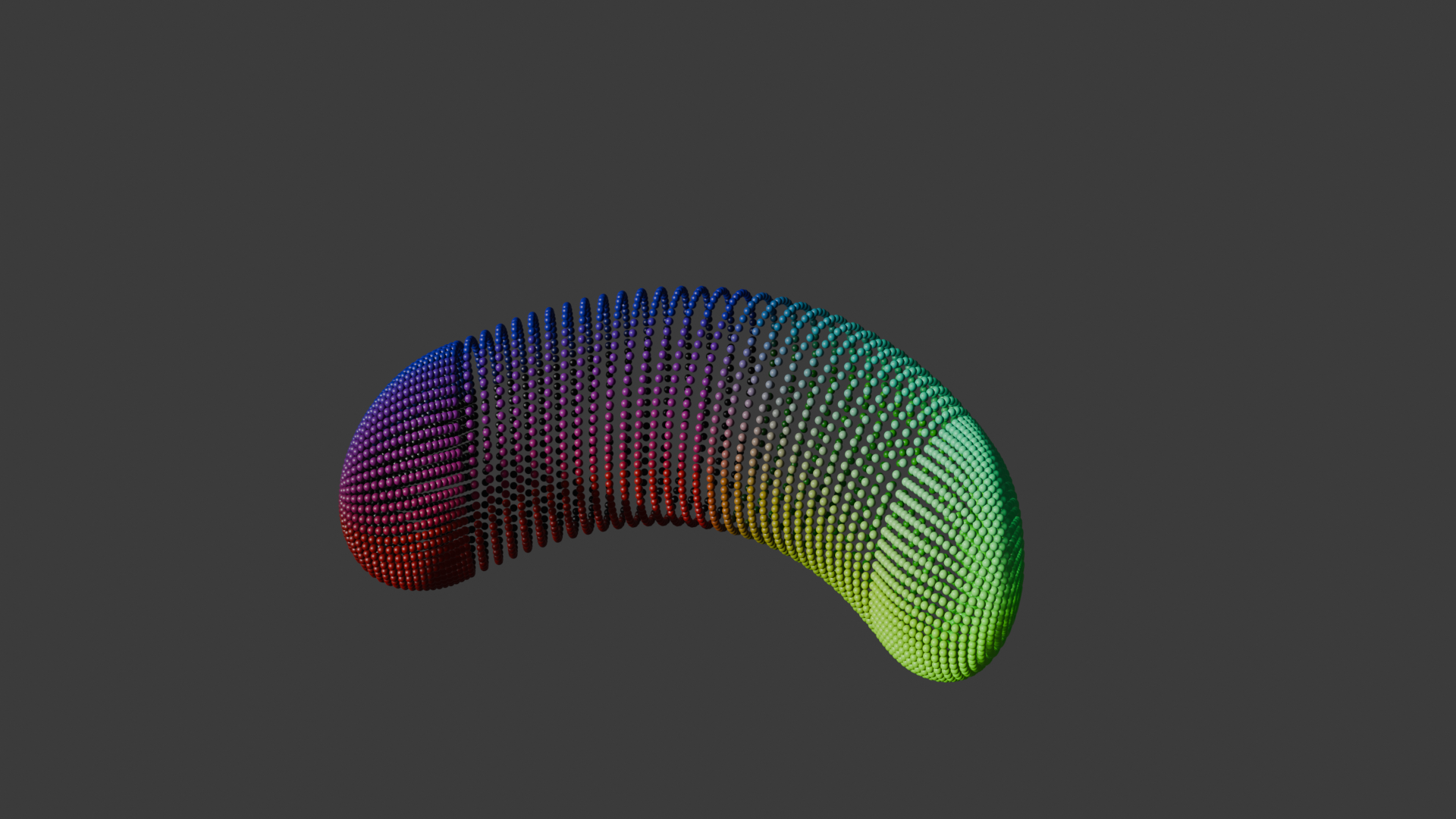}
    \caption{Near-isometric tracking}\label{particlemethod}

    \end{subfigure}
    \caption{Particle configuration at $t=1$}
\end{figure}



\FloatBarrier
\bibliographystyle{siam}
\bibliography{literatur}{}

\appendix


\section{Proof of Lemma~\ref{lemkernel1}}
  Note that the colums of $R_\bx:=\begin{pmatrix} 1 & 0 & -y \\ 0 & 1 & x \end{pmatrix}$ form a basis of the space $K$ of rigid motions.    The kernel space $\cK_{\hat \bz}$ is a linear subspace of $K$. Hence, there is a linear subspace $ U \subset \R^3$ with $\dim (U) =\dim (\cK_{\hat \bz})$ and  such that $\bw \in \cK_{\hat \bz}$ iff $\bw(\bx)= R_\bx \bp$ for a $\bp \in U$ and all $\bx \in \Omega$. 
For all $\bv \in \cK_{\hat \bz}$ we have  $\bv \cdot \hat \bz=0$ a.e. on $\Omega$. This yields
\[
 (R_\bx \bp)^T \hat \bz(\bx) = \bp^T (R_\bx^T\hat \bz(\bx))=0 \quad \forall ~\bp \in U, ~\forall ~\bx \in \Omega ~\text{(a.e.)}.
\]
Hence we obtain $U \perp W_{\hat \bz}$ with 
$W_{\hat \bz}:= {\rm span} \{ \, \cup_{\bx \in \Omega} R_\bx^T \hat \bz(\bx)\}$. 
 Note that, with $\hat \bz =( \hat z_1, \hat z_2)^T$, we have
 \[
  R_\bx^T \hat \bz(\bx)= \begin{pmatrix} \hat z_1(\bx) \\ \hat z_2(\bx) \\ - y \hat z_1(\bx) + x \hat z_2(\bx) \end{pmatrix}.
 \]
 Using $\|\hat \bz \|=1$ on $\Omega$ it follows that $\dim (W_{\hat \bz}) \geq 2$, which implies $\dim (U) \leq 1$. Thus the first statement of the Lemma~\ref{lemkernel1} holds. From $\dim(W_{\hat \bz})=3$ it follows that $\dim(U)=0$, which proves the second statement of the lemma. 
Let $\bw \in  K$ be a nontrivial rigid motion in $\R^2$ and $\bw_\perp$ such that $\bw_\perp \cdot \bw=0$ on 
$\R^2$. If $\bw$ is a rotation we assume that the center of the rotation does not lie in $\overline{\Omega}$. For $\hat \bz:= \frac{ \bw_\perp}{\|\bw_\perp\|}$ we then have $\hat \bz \in C^\infty(\overline{\Omega})$. Per construction we have $\bw \in \cK_{\hat \bz}$ and since $\dim (\cK_{\hat \bz}) \leq 1$ it follows that $\cK_{\hat \bz}= {\rm span} (\bw)$. This completes the proof.

\section{Proof of Lemma~\ref{lemkernel2}}
The proof of the first part of the first two statements in the lemma is along the same lines as for the case $d=2$ above.
The columns of the matrix $R_\bx$ in \eqref{defR} form  a basis of the space $K$ of rigid motions.    The kernel space $\cK_{\hat \bz}$ is a linear subspace of $K$. Hence, there is a linear subspace $ U \subset \R^6$ with $\dim (U) =\dim (\cK_{\hat \bz})$ and  such that $\bw \in \cK_{\hat \bz}$ iff $\bw(\bx)= R_\bx \bp$ for a $\bp \in U$ and all $\bx \in \Omega$. 
For all $\bv \in \cK_{\hat \bz}$ we have  $\bv \cdot \hat \bz=0$ a.e. on $\Omega$. This yields
\[
 (R_\bx \bp)^T \hat \bz(\bx) = \bp^T (R_\bx^T\hat \bz(\bx))=0 \quad \forall ~\bp \in U, ~\forall ~\bx \in \Omega ~\text{(a.e.)}.
\]
Hence we obtain $U \perp W_{\hat \bz}$ with 
$W_{\hat \bz}:= {\rm span} \{ \, \cup_{\bx \in \Omega} R_\bx^T \hat \bz(\bx)\}$. 
 Note that, with $\hat \bz =( \hat z_1, \hat z_2,\hat z_3)^T$, we have
 \[
  R_\bx^T \hat \bz(\bx)= \begin{pmatrix} \hat z_1(\bx) \\ \hat z_2(\bx) \\ \hat z_3(\bx) \\  - y \hat z_1(\bx) + x \hat z_2(\bx) \\ -z \hat z_1(\bx) + x \hat z_3(\bx) \\  - z \hat z_2(\bx) + y \hat z_3(\bx) \ \end{pmatrix}.
 \]
 Using $\|\hat \bz \|=1$ on $\Omega$ it follows that $\dim (W_{\hat \bz}) \geq 3$, which implies $\dim (U) \leq 3$. Thus the first statement of the Lemma~\ref{lemkernel2} holds. From $\dim(W_{\hat \bz})=6$ it follows that $\dim(U)=0$, which proves the second statement of the lemma.  
The proof of \eqref{resdd} is elementary and left to the reader.
\section{Proof of Lemma~\ref{lemspecial}}
For $\bz$ as in \eqref{defz} we have $\cK_{\hat \bz}= {\rm span}(\bgp)$, cf. Lemma~\ref{lemkernel1}.
The solution of \eqref{problem1} is the unique solution of the variational problem:
\begin{equation} \label{ppp} \begin{split}
  & \bv \in V_{\hat \bz}^0 ~~\text{with}~~(\bv,\bgp)_1= \int_\Omega \bv \cdot \bgp\, d\bx =0 ~~\text{such that} \\
  & a(\bv,\bw)= - a(\bz,\bw)\quad \text{for all}~\bw \in V_{\hat \bz}^0.
\end{split}
\end{equation}
For $\bv, \bw \in \bH^1(\Omega)$ we write $\bv=v_1 {\hat \bz} +v_2\bgp$, $\bw=w_1 {\hat \bz}+w_2 \bgp$.
Note that $\nabla \bv + \nabla\bv^T = \nabla v_1 {\hat \bz}^T + {\hat \bz} \nabla v_1^T +  \nabla v_2 \bgp^T + \bgp \nabla v_2^T$.  Using this and $\bp_1 \bq_1^T : \bp_2 \bq_2^T = (\bp_1 \cdot \bp_2)(\bq_1\cdot \bq_2)$ for $\bp_i, \bq_i \in \R^2$, we obtain
\begin{align*}
 E(\bv):E(\bw)& = 2 \nabla v_1\cdot \nabla w_1 + 2\nabla v_2\cdot \nabla w_2 \\
              & + 2(\nabla v_1 \cdot {\hat \bz})(\nabla w_1 \cdot {\hat \bz}) + 2(\nabla v_2 \cdot \bgp)(\nabla w_2 \cdot \bgp) \\
              & + 2(\nabla v_1 \cdot \bgp)(\nabla w_2 \cdot {\hat \bz}) + 2(\nabla v_2 \cdot {\hat \bz})(\nabla w_1 \cdot \bgp).
\end{align*}
Since $\bz$ is of the form $\bz=z_1 {\hat \bz}$ we have $V_{\hat \bz}^0=\{\, v_2 \bgp~|~v_2 \in H^1(\Omega)\,\}$. For $\bv=v_2 \bgp \in V_{\hat \bz}^0$, $\bw =w_2 \bgp \in V_{\hat \bz}^0$, we get, using $\nabla v_2= (\nabla v_2 \cdot {\hat \bz}) {\hat \bz}+(\nabla v_2 \cdot \bgp) \bgp$,
\[ \begin{split}
 a(\bv,\bw)&= 2 \int_{\Omega}\nabla v_2 \cdot \nabla w_2 +  (\nabla v_2 \cdot \bgp)(\nabla w_2 \cdot \bgp)\, d\bx \\
    & = 2 \int_{\Omega} 2(\nabla v_2 \cdot \bgp)(\nabla w_2 \cdot \bgp) +  (\nabla v_2 \cdot {\hat \bz})(\nabla w_2 \cdot {\hat \bz})\, d\bx,
\end{split} \]
and for $\bz=z_1 {\hat \bz}$
\[
  a(\bz,\bw)= 2 \int_\Omega (\nabla z_1 \cdot \bgp)(\nabla w_2 \cdot {\hat \bz})\, d\bx.
\]
The problem \eqref{ppp} can be reformulated: find $v_2 \in H^1(\Omega)$ with $\int_\Omega v_2 \, d\bx=0$ such that for all $w_2 \in H^1(\Omega)$:
\begin{equation} \label{pppp}
 \int_\Omega 2(\nabla v_2 \cdot \bgp)(\nabla w_2 \cdot \bgp) +  (\nabla v_2 \cdot {\hat \bz})(\nabla w_2 \cdot {\hat \bz})\, d\bx =- \int_\Omega (\nabla z_1 \cdot \bgp)(\nabla w_2 \cdot {\hat \bz})\, d\bx.
\end{equation} 
Note that due to the assumption \eqref{Gauge}, for $\bv=v_2 \bgp$ as in \eqref{defv} we have $\int_\Omega v_2 \, d\bx=0$. 
The result \eqref{pppp} holds if we have
\begin{equation} \label{k1}
  \nabla v_2 \cdot \bgp=0 \quad \text{on}~\Omega, ~  \nabla v_2 \cdot {\hat \bz}= - \nabla z_1 \cdot \bgp\quad \text{on}~\Omega.
\end{equation}
We check this for the $\bv=v_2 \bgp$, $\bz=z_1 {\hat \bz}$ specified above. First note that
\[
  \nabla v_2 \cdot \bgp= - F'\big({\hat \bz} \cdot \begin{pmatrix} x \\ y \end{pmatrix} \big) {\hat \bz} \cdot \bgp=0, \quad \nabla v_2 \cdot {\hat \bz}= - F'\big({\hat \bz} \cdot \begin{pmatrix} x \\ y \end{pmatrix} \big)
\]
holds. Finally note that
\[
 \nabla z_1 \cdot \bgp = F'\big({\hat \bz} \cdot\begin{pmatrix} x \\ y \end{pmatrix}\big) \nabla \big(\bgp\cdot \begin{pmatrix} x \\ y \end{pmatrix}\big) \cdot \bgp + \bgp \cdot\begin{pmatrix} x \\ y \end{pmatrix} \nabla F'\big({\hat \bz} \cdot\begin{pmatrix} x \\ y \end{pmatrix}\big) \cdot \bgp = F'\big({\hat \bz} \cdot\begin{pmatrix} x \\ y \end{pmatrix}\big).
\]
Hence, the relations in \eqref{k1} hold and thus the proof is completed.
\end{document}